\newcommand{\R}{\mathbb{R}}
\newcommand{\C}{\mathbb{C}}
\newcommand{\Z}{\mathbb{Z}}
\newcommand{\bla}{\big \langle}
\newcommand{\bra}{\big \rangle}
\numberwithin{equation}{section}
\newcommand{\ud}[0]{\,\mathrm{d}}
\newcommand{\dist}[0]{\operatorname{dist}}
\newcommand{\BMO}[0]{\operatorname{BMO}}
\newcommand{\supp}[0]{\operatorname{spt}}
\newcommand{\loc}[0]{\operatorname{loc}}
\newcommand{\RH}[0]{\operatorname{RH}}
\renewcommand{\Re}[0]{\operatorname{Re}}
\newcommand{\wt}[1]{{\widetilde{#1}}}
\theoremstyle{plain}
\newtheorem{thm}[equation]{Theorem}
\newtheorem{lem}[equation]{Lemma}
\newtheorem{prop}[equation]{Proposition}
\theoremstyle{definition}
\newtheorem{exmp}[equation]{Example}
\theoremstyle{remark}
\newtheorem{rem}[equation]{Remark}
\title[Multilinear commutators in the two-weight setting]{Multilinear commutators in the two-weight setting}
\subjclass[2010]{42B20}
\keywords{Multilinear Calder\'on-Zygmund operators, commutators, weighted BMO, sparse operators}
\author{Kangwei Li}
\address{Center for Applied Mathematics, Tianjin University, Weijin Road 92, 300072 Tianjin, China}
\email{kli@tju.edu.cn}
\begin{document}

\begin{abstract}
We extend the recently much-studied two-weight commutator estimates to the multilinear setting. In contrast to previous results, our result respects the multilinear nature of the problem fully and is formulated with the genuinely multilinear weights. 
\end{abstract}

\maketitle

\section{Introduction}
The characterization of BMO spaces via the boundedness of commutators has attracted a lot of attention recently. Recall that given a linear operator $T$ and a locally integrable function $b$, the commutator of $T$ and $b$ is defined as $$[b, T]f=bT(f)-T(bf).$$
 This research line initiated from the work of Nehari \cite{Nehari}, who studied the case when the singular integral operator $T$ is the Hilbert transform, by complex analysis methods.  Starting from the celebrated work of Coifman-Rochberg-Weiss \cite{CRW}, commutators have been a central part of harmonic analysis. In this work, they showed that the boundedness of $[b, R_j]$ characterizes the $\BMO$ membership of $b$, 
where $R_j$ is the $j$-th Riesz transform and BMO stands for the usual bounded mean oscillation function space.

Later, in 1976, Muckenhoupt and Wheeden \cite{MW} introduced the weighted BMO space $\BMO_\nu$ with 
  $\nu\in A_\infty$. Although some interesting estimates related with the Hilbert transform are formulated in \cite{MW}, it was until 1985, Bloom \cite{Bloom} finally found the connection between $\BMO_\nu$ and the two-weight boundedness of $[b, H]$:
\[
  \|b\|_{\BMO_\nu} \lesssim \|[b,H]\|_{L^p(\mu) \to L^p(\lambda)} \lesssim \|b\|_{\BMO_\nu}, \qquad 1<p<\infty,
\]where $\mu, \lambda\in A_p$ and $\nu=\mu^{\frac 1p} \lambda^{-\frac 1p}$. Bloom's result extended the commutator estimates to the two-weight context, and from then on Bloom type estimates have been at the focus of harmonic analysts. For the upper bound, Segovia--Torrea \cite{ST} first extended Bloom's result to general Calder\'on-Zygmund operators, see also Holmes--Lacey--Wick \cite{HLW} for a modern proof  and Lerner--Ombrosi--Rivera-R\'ios \cite{LORRadv} for the quantitative upper bounds. The $k$-order iterated case, which is defined  inductively by
\[
C_b^k(T)=[b, C_b^{k-1}(T)], \qquad C_b^1(T)=[b,T]
\] was considered initially with the assumption $b\in \BMO\cap \BMO_\nu$ (see \cite{HW,Hyt16}), and was refined to $\BMO_{\nu^{1/k}}$ later in \cite{LORR}. 
For the lower bound, the influential paper by Holmes--Lacey--Wick \cite{HLW} solved the problem for the Riesz transforms. The general case, i.e. non-degenerate singular integrals, were solved only recently by Hyt\"onen \cite{Hyt18} through the median method. This also improves an earlier result in  \cite{LORR}. 

Recently, Lerner--Ombrosi--Rivera-R\'ios \cite{LORR} and Hyt\"onen \cite{Hyt18} also studied the case when $T$ is the rough homogeneous singular integrals $T_\Omega$.  However, both of them did not provide the upper bound. We will address this problem in Section \ref{sec:rough} as a by-product of our new method. For more about the linear theory, we refer the readers to \cite{A, HPW, LMV:Bloom} and the references therein.

Now it is natural to ask whether one can establish the corresponding multilinear theory.
Given an $n$-linear operator $T$ and a locally integrable function $b$,  we define $$[b, T]_i (f_1, \dots, f_n)=bT(f_1,\dots, f_n)- T(f_1, \dots, bf_i,\dots, f_n),\qquad 1\le i\le n.$$ Notice that by the above definition,
$
[b_2, [b_1, T]_i]_j=[b_1, [b_2, T]_j]_i
$, so given $\mathcal I\subset \{1,\dots, n\}$ we may define the general iterated commutator inductively as
\begin{align}\label{eq:itercommu}
C_{b_{\mathcal I}}^{k_{\mathcal I}}(T):=C_{b_{\{i\}}}^{k_i}(C_{b_{\mathcal I\setminus \{i\}}}^{k_{\mathcal I\setminus \{i\}}}(T)),\quad C_{b_{\{i\}}}^{k_i}(T):=[b_{k_i}^i,\cdots, [b_1^i, T]_i,\cdots, ]_i,\quad  i\in \mathcal I.
\end{align}We simply denote  $C_{b_{\{i\}}}^{k_i}(T)$ by $C_{b}^{k_i}(T)$ when $b=b_1^i=\cdots= b^i_{k_i}$.

In the multilinear setting the most interesting phenomena occurs when the genuinely multilinear weights are used. The genuinely multilinear weights were introduced by Lerner, Ombrosi, P\'erez, Torres and Trujillo-Gonz\'alez in the very influential work \cite{LOPTT}. The point is, one only needs to assume a weaker joint condition on the tuple of weights $(w_1,\cdots, w_n)$ rather than to assume individual conditions on $w_i$. Commutator estimates involving genuinely multilinear weights in the one-weight situation already appear in the literature, see e.g.  \cite{LOPTT}, where they proved
\begin{equation}\label{eq:oneweightcommu}
[b,T]_j: L^{p_1}(w_1)\times \cdots \times L^{p_n}(w_n)\to L^p\big(\prod_{i=1}^n w_i^{\frac p{p_i}}\big)\qquad 1\le j\le n.
\end{equation} 
Multilinear commutator estimates in the two-weight setting have also been studied by Kunwar and Ou in \cite{KO}. However, they did not use the genuinely multilinear weights. To be more precise, when considering e.g. $[b,T]_1$, they need to assume $w_1, \lambda_1\in A_{p_1}$ in addition to natural assumptions. Here, for the first time, we work in the simultaneous presence of both complications.
\begin{thm}\label{thm:main1}
 Let  $T$ be an $n$-linear Calder\'on-Zygmund operator, $1\le i\le n$, $1\le k_i<\infty$ and $C_{b_{\{i\}}}^{k_i}(T)$ be defined as in \eqref{eq:itercommu}. Given $\theta_1,\dots, \theta_{k_i}\in [0,1]$ such that $\sum_{\ell=1}^{k_i}\theta_{\ell}=1$, and let $\theta=\max\{\theta_\ell\}_{1\le \ell\le k_i}$. Let $1<p_1,\dots, p_n<\infty$ and $\frac 1p=\sum_{i=1}^n \frac1{p_i}$. Assume that $(w_1,\dots, w_n), (w_1,\dots, w_{i-1},\lambda_i, w_{i+1},\dots, w_n)\in A_{\vec p}$ with $\nu_i^\theta :=w_i^{\frac \theta{p_i}}\lambda_i^{-\frac \theta{p_i}}\in A_\infty$ and $b^i_{\ell}\in \BMO_{\nu_i^{\theta_{\ell}}}$ for every $1\le \ell\le k_i$. Then
  \begin{align*}
  \Big\| C_{b_{\{i\}}}^{k_i}(T): L^{p_1}(w_1)\times \cdots\times L^{p_n}(w_n)\to L^p\big( \lambda_i^{\frac p{p_i}}\prod_{j\neq i  }w_j^{\frac p{p_j}} \big)\Big\|\lesssim \prod_{\ell=1}^{k_i} \|b_{\ell}^i\|_{\BMO_{\nu_i^{\theta_\ell}}}.
  \end{align*}
\end{thm}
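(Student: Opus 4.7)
The plan is to reduce the estimate to a genuinely multilinear two-weight sparse bound by first establishing a pointwise sparse domination of the commutator, then performing a John-Nirenberg-type splitting of the oscillations that exploits the decomposition $\nu_i=\prod_{\ell=1}^{k_i}\nu_i^{\theta_\ell}$.

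First I would prove a pointwise sparse domination. Iterating the one-slot identity
$[b,T]_i(\vec f) = (b-\langle b\rangle_Q)T(\vec f) - T(f_1,\dots,(b-\langle b\rangle_Q)f_i,\dots,f_n)$
and inserting a multilinear pointwise sparse bound for $T$ (of Conde-Alonso/Culiuc/Di Plinio/Ou or Lerner-Nazarov type), one obtains, for some sparse family $\calS$,
\[
|C_{b_{\{i\}}}^{k_i}(T)(\vec f)(x)|\lesssim\sum_{E\subseteq\{1,\dots,k_i\}}\mathcal{A}_{\calS,E}^{\vec b}(\vec f)(x),
\]
where, with $\bar b_\ell(Q):=b^i_\ell-\langle b^i_\ell\rangle_Q$,
\[
\mathcal{A}_{\calS,E}^{\vec b}(\vec f)(x)=\sum_{Q\in\calS}\prod_{\ell\in E}|\bar b_\ell(Q)(x)|\cdot\Big\langle|f_i|\prod_{\ell\notin E}|\bar b_\ell(Q)|\Big\rangle_Q\prod_{j\ne i}\langle|f_j|\rangle_Q\cdot 1_Q(x).
\]

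Next I would dualise each $\mathcal{A}_{\calS,E}^{\vec b}$ against $g\in L^{p'}(\sigma)$, with $\sigma$ the dual weight of $\lambda_i^{p/p_i}\prod_{j\ne i}w_j^{p/p_j}$. Since $\nu_i^\theta\in A_\infty$ and $\theta_\ell\le\theta$, every $\nu_i^{\theta_\ell}$ lies in $A_\infty$, and the weighted John-Nirenberg inequality yields
\[
\Big(\frac{1}{|Q|}\int_Q|\bar b_\ell(Q)|^r\Big)^{1/r}\lesssim\|b^i_\ell\|_{\BMO_{\nu_i^{\theta_\ell}}}\langle\nu_i^{\theta_\ell}\rangle_Q
\]
for a suitable $r>1$. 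A H\"older bookkeeping on each cube (with the $L^r$-enlargement of the $\vec f$- and $g$-exponents absorbed by the open-ended nature of $A_{\vec p}$) then replaces every oscillation $|\bar b_\ell(Q)|$ by $\|b^i_\ell\|_{\BMO_{\nu_i^{\theta_\ell}}}\langle\nu_i^{\theta_\ell}\rangle_Q$. After this reduction a reverse-H\"older estimate gives $\prod_\ell\langle\nu_i^{\theta_\ell}\rangle_Q\lesssim\langle\nu_i\rangle_Q$, and what remains is a standard two-weight genuinely multilinear sparse form whose weight data interpolate between the two tuples $(w_1,\dots,w_n)$ and $(w_1,\dots,\lambda_i,\dots,w_n)$; the $A_{\vec p}$ hypothesis on both tuples closes the estimate through the known two-weight sparse bound for multilinear averaging operators.

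The main difficulty is handling the \emph{inside} oscillations $\bar b_\ell(Q)$ sitting within the $Q$-average of $f_i$: these cannot be extracted pointwise and must be processed cube-by-cube via John-Nirenberg and H\"older, with careful bookkeeping across all subsets $E\subseteq\{1,\dots,k_i\}$. The hypothesis that $\nu_i^\theta\in A_\infty$ with $\theta=\max_\ell\theta_\ell$ is precisely what provides uniform control of the reverse-H\"older constants for every $\nu_i^{\theta_\ell}$, and it is the interplay of this $A_\infty$ estimate with the genuinely multilinear $A_{\vec p}$ condition on the two weight tuples that constitutes the crux of the argument.
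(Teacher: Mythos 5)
The paper's proof of Theorem~\ref{thm:main1} does not go through John--Nirenberg at all. Starting from the same sparse domination (Proposition~\ref{prop:sparsedom}), it treats the two kinds of oscillations as follows: for the ``outside'' factors $\prod_{\ell\in A_i}|b^i_\ell-\langle b^i_\ell\rangle_Q|$ it applies Lemma~\ref{lem:uniformsparse} and iterates Lemma~\ref{lem:RHI} to push $\nu_i^{p\theta_{A_i}}$ into the target weight, obtaining the new tuple $(w_1,\dots,\nu_i^{p_i\theta_{A_i}}\lambda_i,\dots,w_n)\in A_{\vec p}$; and, crucially, for the ``inside'' factors $\prod_{\ell\in B_i}|b^i_\ell-\langle b^i_\ell\rangle_Q|$ sitting under the average of $f_i$, it again applies Lemma~\ref{lem:uniformsparse} and reduces the problem to a composition of the multilinear sparse operator with \emph{weighted} linear sparse operators $\mathcal A_{\wt{\mathcal S}}^{\zeta_k}$, whose boundedness comes for free from $\zeta_k\in A_\infty$ (here $\zeta_1=w_i^{1-p_i'}$ and $\zeta_k=(\lambda_i^{1-p_i'})^{\sum_{s<k}\theta_{\ell_s}}(w_i^{1-p_i'})^{1-\sum_{s<k}\theta_{\ell_s}}$). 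This device is what allows genuinely multilinear weights in the two-weight setting; it is the central new idea of the paper.

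Your plan replaces all of this by John--Nirenberg and H\"older on each cube, leading to a sparse form carrying a multiplier of the shape $\langle\nu_i^{\theta_B}\rangle_Q\langle\nu_i^{\theta_A}\rangle_Q$ next to $L^s$-averages of $f_i$ and $g$. The decisive step of your argument --- ``what remains is a standard two-weight genuinely multilinear sparse form whose weight data interpolate between the two tuples\dots the $A_{\vec p}$ hypothesis on both tuples closes the estimate'' --- is exactly where the difficulty of the whole theorem lives, and it is not justified. A multilinear sparse form decorated with the multiplicative factor $\langle\nu_i\rangle_Q$ is \emph{not} a standard $A_{\vec p}$-sparse form: one cannot push $\langle\nu_i\rangle_Q$ into $\langle|f_i|^s\rangle_Q^{1/s}$ or $\langle|g|^s\rangle_Q^{1/s}$ by reverse H\"older, since $f_i,g$ are not $A_\infty$ weights and Lemma~\ref{lem:RHI}/Lemma~\ref{lem:converse} do not apply. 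Indeed, the corresponding \emph{linear} estimate
$\sum_{Q\in\mathcal S}\bla|b-b_Q|^s|f|^s\bra_Q^{1/s}\langle|g|\rangle_Q|Q|\lesssim\|b\|_{\BMO_\nu}\|f\|_{L^p(\mu)}\|g\|_{L^{p'}(\lambda^{1-p'})}$
was left open in \cite{LORR} precisely because such a bound is nontrivial, and resolving it (Section~\ref{sec:rough} of the paper) requires the Fefferman--Stein inequality \eqref{eq:fs}. Even with that tool the paper only reaches the multilinear case when a \emph{single} BMO function is attached to each argument, and explicitly declines to handle different BMO functions paired with the same slot --- which is the generic situation produced by your H\"older splitting. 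So, as written, your approach does not close; to make it close you would need either to avoid the $L^s$-enlargement entirely (as the paper does, by never applying John--Nirenberg to the inside factors and instead using weighted sparse operators), or to supply a genuinely multilinear Fefferman--Stein-type inequality that handles products of several distinct oscillations inside a single average, which is an open problem.

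One smaller point: the John--Nirenberg estimate you invoke, $(\fint_Q|b-b_Q|^r)^{1/r}\lesssim\|b\|_{\BMO_\nu}\langle\nu\rangle_Q$ for small $r>1$, is correct when $\nu\in A_\infty$ (via Lemma~\ref{lem:uniformsparse}, the $L^r$-boundedness of the sparse operator, and the reverse H\"older property of $\nu$), but it is not the statement of Theorem~\ref{thm:jn}, which is the $\sigma$-weighted $L^{q'}$ version; you should supply the short argument rather than cite it as known.
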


\begin{rem}
Theorem \ref{thm:main1} is the upper bound of the iterated commutator when $\mathcal I$ contains only one element. For general $\mathcal I$, our method also works, but it is more technical and we will record it in Section \ref{sec:proofmain1}, where we also comment on the quantitative upper bounds and a comparison between our bounds and the bounds via the Cauchy integral trick in \cite{DHL,BMMST}.
\end{rem}
\begin{rem}\label{rem:large}
  The assumption $\nu_i^\theta \in A_\infty$ ensures that $\nu_i^{\theta_\ell}\in A_\infty$ for all $1\le \ell \le k_i$ so that the weighted BMO spaces are well-defined. However, if $\theta \le \frac 1n$, then there is no need to assume $\nu_i^\theta \in A_\infty$ as it is automatically true. More details are provided in Subsection \ref{subsec:bmo}.
\end{rem}

Unlike the two-weight situation, the one-weight situation has many tools such as sharp maximal function estimates \cite{PT, PPTT}, Cauchy integral trick \cite{DHL, BMMST} and also sparse domination. 
In the two-weight setting it seems that only sparse domination survives. However, if one simply follows the known strategy, one will meet a term which involves a composition of multilinear sparse operator and linear sparse operator, and this is the reason why the genuinely multilinear weights were not able to appear in the two-weight setting before. See \cite{KO} for the details. 
We overcome this difficulty by establishing a Muckenhoupt-Wheeden type result, then we are able to reduce the problem to estimating the composition of multilinear sparse operator and \textit{weighted} linear sparse operator. The idea behind is based on two facts: 
\begin{enumerate}
\item for any genuinely multilinear weight $(w_1,\cdots, w_n)$, $w_i^{1-p_i'}\in A_\infty$;
\item weighted linear sparse operator is bounded if the underlying weight belongs to $A_\infty$. 
\end{enumerate}  

To complete the multilinear commutator theory in the two-weight setting we still need to provide the lower bounds. It is worth mentioning that, for the lower bound the genuinely multilinear weights were not used even in the one-weight situation in previous results. For example, in \cite{GLW} Guo, Lian and Wu achieved the lower bound when $T$ is certain non-degenerate multilinear Calder\'on-Zygmund operator. Kunwar and Ou \cite{KO} also obtained a similar result for the multilinear Haar multipliers. However, both of them need to assume individual conditions on $w_i$.

In contrast to previous results, we are able to use genuinely multilinear weights in the two-weight setting.
\begin{thm}\label{thm:main2}
 Let  $T$ be an $n$-linear non-degenerate Calder\'on-Zygmund operator (see Subsection \ref{subsec:czo} for the definition), $1\le i\le n$, $1\le k_i<\infty$, $b\in L_{\loc}^{k_i}$ and $C_{b}^{k_i}(T)$ be defined as in \eqref{eq:itercommu}. Let $1<p_1,\dots, p_n<\infty$ and $\frac 1p=\sum_{i=1}^n \frac1{p_i}$. Assume that $$(w_1,\dots, w_n), (w_1,\dots, w_{i-1},\lambda_i, w_{i+1},\dots, w_n)\in A_{\vec p}$$ with $\nu_i^{\frac 1{k_i}} :=w_i^{\frac 1{k_ip_i}}\lambda_i^{-\frac 1{k_ip_i}}\in A_\infty$. Then
  \begin{align*}
   \|b \|_{\BMO_{\nu_i^{1/{k_i}}}}^{k_i}\lesssim \Big\| C_{b}^{k_i}(T): L^{p_1}(w_1)\times \cdots\times L^{p_n}(w_n)\to L^p\big( \lambda_i^{\frac p{p_i}}\prod_{j\neq i  }w_j^{\frac p{p_j}} \big)\Big\|.
  \end{align*}
\end{thm}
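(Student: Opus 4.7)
The plan is to adapt Hyt\"onen's median-plus-non-degeneracy method \cite{Hyt18} to the multilinear two-weight setting. Fix a cube $Q$, let $\alpha$ be a median of $b$ on $Q$, and set $E_\pm := \{x \in Q : \pm(b(x) - \alpha) \geq 0\}$, so that $|E_\pm| \geq |Q|/2$. Using the non-degeneracy hypothesis (Subsection \ref{subsec:czo}), I would choose auxiliary cubes $\tilde Q_j$, $j \neq i$, of side length $\simeq \ell(Q)$ on which the kernel $K$ of $T$ has a fixed sign and satisfies $|K(y, x_1, \dots, x_n)| \gtrsim |Q|^{-n}$ whenever $y \in Q$, $x_i \in Q$, and $x_j \in \tilde Q_j$; after absorbing the fixed sign into the test functions, I assume $K > 0$ in the relevant configuration.

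Take $f_i := 1_{E_-}$ and $f_j := 1_{\tilde Q_j}$ for $j \neq i$, and start from the identity
\[
C_b^{k_i}(T)(f_1, \dots, f_n)(y) = \int (b(y) - b(x_i))^{k_i} K(y, x_1, \dots, x_n) \prod_\ell f_\ell(x_\ell) \, dx_1 \cdots dx_n.
\]
On $E_+ \times E_-$ the median property gives $b(y) - b(x_i) \geq |b(y) - \alpha| \geq 0$, while the kernel lower bound combined with the volume factor $|E_-| \prod_{j \neq i}|\tilde Q_j| \simeq |Q|^n$ produces a net constant $\gtrsim 1$. Hence $|C_b^{k_i}(T)(f_1, \dots, f_n)(y)| \gtrsim |b(y) - \alpha|^{k_i}$ for $y \in E_+$, with the symmetric bound on $E_-$. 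Taking $L^p$-norms against $W := \lambda_i^{p/p_i}\prod_{j \neq i} w_j^{p/p_j}$ on $Q$ and invoking the boundedness hypothesis produces
\[
\Bigl(\int_Q |b - \alpha|^{p k_i} W \, dy\Bigr)^{1/(p k_i)} \lesssim \|C_b^{k_i}(T)\|^{1/k_i} \Bigl(w_i(Q)^{p/p_i} \prod_{j \neq i} w_j(\tilde Q_j)^{p/p_j}\Bigr)^{1/(p k_i)}.
\]

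To transfer this weighted $L^{p k_i}$-oscillation estimate into the $\BMO_{\nu_i^{1/k_i}}$-norm, I would apply H\"older's inequality with exponents $p k_i$ and $(p k_i)'$ to $\int_Q |b - \alpha|\, dy$; together with the $A_\infty$ comparison $w_j(\tilde Q_j) \simeq w_j(Q)$, this reduces the problem to a structural inequality whose right-hand side is $\nu_i^{1/k_i}(Q)$. This is where both hypotheses $(w_1, \dots, w_n),\, (w_1, \dots, w_{i-1}, \lambda_i, w_{i+1}, \dots, w_n) \in A_{\vec p}$ enter essentially: combining the two $A_{\vec p}$-conditions via H\"older in product form, together with the observation (used in the introduction to drive the upper bound) that $w_j^{1-p_j'} \in A_\infty$ for genuinely multilinear weights, should yield the required $\nu_i^{1/k_i}(Q)$ after routine algebra. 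The principal obstacle is precisely this last step: the median/non-degeneracy half of the argument transfers naturally from \cite{Hyt18} by treating the $i$-th slot exactly as in the linear case and ``filling'' the remaining slots with indicators $1_{\tilde Q_j}$, but reconciling two distinct genuinely multilinear $A_{\vec p}$ conditions with the single scalar weight $\nu_i^{1/k_i}$ on the BMO side calls for careful exponent bookkeeping.
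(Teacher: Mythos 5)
The proposal follows Hyt\"onen's median/non-degeneracy template, which is indeed the paper's starting point, but it contains two gaps that defeat the argument precisely at the places where the multilinear two-weight setting diverges from the linear one.

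First, the geometry is set up so that the kernel lower bound you invoke does not follow from the non-degeneracy hypothesis. You evaluate the commutator at $y\in Q$, feed $x_i\in Q$ into the $i$-th slot, and fill the remaining slots with far cubes $\wt Q_j$. In that configuration the argument tuple $(x_1,\dots,x_n)$ of $K(y,x_1,\dots,x_n)$ is \emph{far from the diagonal} (some $x_j$ are separated from $x_i$ by $\gtrsim\ell(Q)$), while the non-degeneracy assumption \eqref{eq:non-degen} only produces a lower bound on the diagonal $K(x,y,\dots,y)$ for some $x$ at distance $\sim r$, propagated by the Dini modulus to a neighbourhood of the diagonal. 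The paper avoids this by doing the opposite: it fixes \emph{one} far cube $\wt Q$, puts \emph{all} test functions inside $Q$, and evaluates at $x\in\wt Q$, so that the kernel is read at $(x,y_1,\dots,y_n)$ with $y_1,\dots,y_n$ close to one another and $x$ far away — exactly the configuration controlled by \eqref{eq:non-degen} and \eqref{eq:regularity}.

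Second, and more fundamentally, you test with unweighted indicators $1_{E_-}$, $1_{\wt Q_j}$. This produces $w_i(Q)^{1/p_i}\prod_{j\neq i}w_j(\wt Q_j)^{1/p_j}$ on the right of the boundedness inequality, and you then have to convert that into something comparable to $\nu_i^{1/k_i}(Q)$. That conversion would require comparing $\langle w_j\rangle_Q$ with $\langle w_j^{1-p_j'}\rangle_Q^{-(p_j-1)}$, i.e.\ an individual $A_{p_j}$ condition on each $w_j$ — exactly the assumption the genuinely multilinear $A_{\vec p}$ class is meant to dispense with. Your closing remark that ``reconciling two distinct genuinely multilinear $A_{\vec p}$ conditions with the single scalar weight $\nu_i^{1/k_i}$ calls for careful exponent bookkeeping'' is in fact the point where the approach breaks: there is no bookkeeping that closes this gap with unweighted test functions. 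The paper's key move is to test with the dual-weighted indicators $1_Q\sigma_j$ with $\sigma_j=w_j^{1-p_j'}$ (and $1_A\sigma_i$ for the $i$-th slot), so that the right-hand side becomes $\prod_j\sigma_j(Q)^{1/p_j}$, which dovetails with a pure H\"older inequality $\big(\tfrac{v(Q)}{|Q|}\big)^{1/p}\big(\tfrac{\eta_i(Q)}{|Q|}\big)^{1/p_i'}\prod_{j\neq i}\big(\tfrac{\sigma_j(Q)}{|Q|}\big)^{1/p_j'}\ge1$. The outcome is a control on $\fint_Q(\alpha-b)_+^{k_i}\sigma_i$, i.e.\ membership in the weighted Bloom space $\wt\BMO_{\nu_i^{1/k_i}}(\sigma_i^{1/k_i})$, and the paper then invokes Lemma \ref{lem:equivalence} (via Remark \ref{rem:altdef}) to pass back to $\BMO_{\nu_i^{1/k_i}}$. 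Your proposal does not anticipate this detour through the weighted Bloom BMO and the Muckenhoupt--Wheeden type equivalence, which is precisely the structural device the paper introduces in Section \ref{sec:lemmata}.

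A minor additional issue: you take the full $L^p$-norm of the pointwise bound $|b(y)-\alpha|^{k_i}$ and then try to H\"older down to $\int_Q|b-\alpha|$. When $pk_i\le1$ (which is possible since $p$ can be as small as $\max_j p_j^{-1}$-ish, in particular $p<1$ is allowed), the exponents $pk_i,(pk_i)'$ are not admissible. The paper sidesteps this by observing that the commutator is bounded below by an $x$-independent constant on the set $\wt Q\cap\{b\ge\alpha\}$, which has $v$-measure $\gtrsim v(Q)$ since $v\in A_\infty$, so the $L^{p,\infty}$-norm alone immediately produces the needed factor $v(Q)^{1/p}$ with no restriction on $p$.
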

\begin{rem}\label{rem:weakerassumption}
Similar as Remark \ref{rem:large}, if $k_i\ge n$ then we do not need to assume $\nu_i^{\frac 1{k_i}}\in A_\infty$ as it is automatically true. Moreover, in Section 4 we will actually prove the above result with a weaker boundedness assumption.
\end{rem}
This paper is organized as the following: in Section \ref{sec:lemmata} we provide necessary notations and auxiliary results. Section \ref{sec:proofmain1} is devoted to proving Theorem \ref{thm:main1}. The lower bound is handled is Section \ref{sec:proofmain2}. In Section \ref{sec:rough} we provide a short discussion about the upper bound of the commutator of rough homogeneous singular integrals, and general sparse operators involving  weighted BMO functions. 

\subsection*{Acknowledgements}
This work was supported by the National Natural Science Foundation of China through project number 12001400. The author also would like to thank the anonymous referee for his/her careful reading that helped improving the presentation of the paper.

\section{Definitions and auxiliary lemmata}\label{sec:lemmata}
\subsection{Basic notations}
We denote $A\lesssim B$ if $A\le CB$ for some constant that can depend on the dimension, Lebesgue exponents, weight constants, and on various other constants appearing in the assumptions. We denote $A\sim B$ if $B\lesssim A\lesssim B$. 

Given a cube $Q$, a measure $\mu$ and a locally integrable function $f$, we denote the average $\mu(Q)^{-1}\int_Q f\ud \mu=f_Q^\mu=\langle f\rangle_Q^\mu$. When $\mu$ is Lebesgue measure we simply write $|Q|^{-1}\int_Q f=\fint_Q f =f_Q =\langle f\rangle_Q$. 
\subsection{Multilinear Calder\'on-Zygmund operators}\label{subsec:czo}The multilinear Calder\'on-Zygmund theory was systematically formulated by Grafakos and Torres \cite{GT}. Let us begin with the definition of multilinear Calder\'on-Zygmund operators (CZOs).
Let $\Delta:=\{(x, y_1,\cdots, y_n)\in (\R^{d})^{n+1}: x=y_1=\cdots =y_n\}$ be the diagonal in $(\R^{d})^{n+1}$. We say $K: (\R^{d})^{n+1}\setminus \Delta \to \C$ is a multilinear Calder\'on-Zygmund kernel if
\begin{align}
  |K(x,y)|&\le \frac{C}{(\sum_{i=1}^n|x-y_i|)^{nd}}, \label{eq:size}\\
   |K(x+h,y)-K(x,y)|+ \sum_{i=1}^{n}&|K(x,\cdots, y_i+h,\cdots)-K(x,y)| \label{eq:regularity}\\
   &\le \frac{C}{(\sum_{i=1}^n|x-y_i|)^{nd}}\omega\left(\frac{h}{\sum_{i=1}^n|x-y_i|}\right)\nonumber
\end{align}
whenever $h\le \frac 12 \max_i |x-y_i|$, where $\omega$ is an increasing subadditive function with $\omega(0)=0$ and
\[
  \|\omega\|_{\rm{Dini}}=\int_{0}^{1}\omega(t)\frac{\ud t}{t}<\infty.
\]

Then we say $T$ is a multilinear CZO if $T$ is initially bounded from $L^{q_1}(\R^d)\times \cdots \times L^{q_n}(\R^d)\to L^q(\R^d)$ with $q_i\in (1,\infty]$, $\frac 1q=\sum_{i=1}^n \frac 1{q_i}>0$ and there exists a multilinear Calder\'on-Zygmund kernel $K$ such that for all $f_1,\dots, f_n\in C_c^\infty(\R^d)$,
\begin{equation}\label{eq:kernelrep}
  T(f_1,\dots, f_n)(x)=\int_{\R^{nd}} K(x,y_1,\cdots, y_n) \prod_{i=1}^n f_i(y_i) \ud y,\qquad x\notin \bigcap_{i=1}^n \supp f_i.
\end{equation}
In \cite{DHL}, the author, Dami\'an and Hormozi obtained a pointwise sparse bound for multilinear CZOs introduced in the above. That is,
\[
  |T(f_1,\dots, f_n)(x)|\lesssim \sum_{j=1}^{3^d}\mathcal A_{S_j}(f_1,\dots, f_n)(x):=\sum_{j=1}^{3^d}\sum_{Q\in \mathcal S_j} \prod_{i=1}^{n} \langle |f_i|\rangle_Q 1_Q(x),
\]where  $\mathcal S_j$ is a sparse collection for every $1\le j\le 3^d$. Recall that we say a collection of cubes $\mathcal S$ is $\rho$-sparse if for every $S\in \mathcal S$, there exists $E_S\subset S$ with $|E_S|\ge \rho |S|$ and $\{E_S\}$ are pairwise disjoint. Usually we choose $\rho=\frac 12$.

Now we define the non-degenerate Calder\'on-Zygmund operators. We say $T$ is a non-degenerate multilinear CZO if there is a function $K$ such that \eqref{eq:size}, \eqref{eq:regularity} and \eqref{eq:kernelrep} holds with $\omega(0)\to 0$ when $t\to 0$, and in addition, for every $y\in \R^d$ and $r>0$, there exists $x\notin B(y, r)$ with
\begin{equation}\label{eq:non-degen}
|K(x,y,\dots,y)|\gtrsim \frac 1{r^{nd}}.
\end{equation}
Note that \eqref{eq:size} and \eqref{eq:non-degen} imply that $|x-y|\sim r$. The multilinear Riesz transforms are typical examples of non-degenerate Calder\'on-Zygmund operators. 
\subsection{Weights}
By weights we mean positive locally integrable functions. Recall that we say a weight $w\in A_p$ if
\[
 [w]_{A_p}:= \sup_Q \fint_Q w \left(\fint_Q w^{-\frac 1{p-1}}\right)^{p-1}<\infty, \qquad 1<p<\infty.
\]And we say $w\in A_\infty$ if
\[
  [w]_{A_\infty}:= \sup_Q \frac 1{w(Q)}\int_Q M(w\chi_Q)<\infty.
\]An important property of $A_\infty$ weights is recorded as the following lemma.
\begin{lem}\label{lem:RHI}
  Let $t_1,\dots, t_n\in (0,\infty)$ and $w_1,\dots, w_n\in A_{\infty}$. Then there exists a constant $\RH_{\vec w,\vec t}$ such that for every cube $Q$,
  \begin{equation}\label{eq:RHI}
     \prod_{i=1}^n \left(\fint_Q w_i \right)^{t_i}\le \RH_{\vec w,\vec t} \fint_Q \prod_{i=1}^n w_i^{t_i}.
  \end{equation}We may abuse notation a little that we still denote by $ \RH_{\vec w,\vec t}$  the best constant such that \eqref{eq:RHI} holds.
\end{lem}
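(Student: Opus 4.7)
The plan is to reduce the claimed product-of-averages inequality to Jensen's inequality via the classical exponential (Hruschev) characterization of $A_\infty$. Specifically, I would invoke the well-known fact that for every $w\in A_\infty$ there is a finite constant $c_w$, quantitatively controlled by $[w]_{A_\infty}$, such that
\[
\fint_Q w \le c_w \exp\Big(\fint_Q \log w\Big)
\]
for every cube $Q$. This is equivalent to the Fujii--Wilson definition recalled just above Lemma \ref{lem:RHI}; the quantitative equivalence appears for instance in Hyt\"onen--P\'erez.

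From here the computation is essentially one line. Applying the display to each $w_i$, raising to the positive exponent $t_i$, and multiplying the resulting $n$ inequalities,
\[
\prod_{i=1}^n \Big(\fint_Q w_i\Big)^{t_i} \le \Big(\prod_{i=1}^n c_{w_i}^{t_i}\Big)\exp\Big(\fint_Q \sum_{i=1}^n t_i\log w_i\Big) = \Big(\prod_{i=1}^n c_{w_i}^{t_i}\Big)\exp\Big(\fint_Q \log\prod_{i=1}^n w_i^{t_i}\Big).
\]
Then I would apply Jensen's inequality to the convex function $\exp$ against the probability measure $|Q|^{-1}\,dx$ on $Q$, which gives $\exp(\fint_Q \log f)\le \fint_Q f$ for any positive $f$. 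Taking $f = \prod_{i=1}^n w_i^{t_i}$ finishes the proof and identifies the admissible constant as $\RH_{\vec w,\vec t}=\prod_{i=1}^n c_{w_i}^{t_i}$.

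There is no substantial obstacle; the argument is short and uniform in the signs and sizes of the $t_i$. The one small point requiring care is citing or deriving the exponential characterization from the Fujii--Wilson definition the paper adopts, but this is standard. An alternative route is to fix a common reverse H\"older exponent $s>1$ for all the $w_i$ and apply generalized H\"older and Jensen on $s$-th powers; this also works but forces a separate treatment of the ranges $t_i\ge 1$ and $t_i<1$, so the exponential approach is cleaner.
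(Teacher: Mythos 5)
Your proof is correct, but it follows a genuinely different route from the one in the paper. You deduce the inequality from the Hruschev exponential characterization of $A_\infty$, namely $\fint_Q w_i \le c_{w_i}\exp\bigl(\fint_Q\log w_i\bigr)$, multiply the resulting bounds raised to the $t_i$, and then close with Jensen's inequality $\exp\bigl(\fint_Q\log f\bigr)\le\fint_Q f$. The paper instead argues measure-theoretically: it embeds each $w_i\in A_\infty$ into some $A_{q_i}$, uses the $A_{q_i}$ condition to show that the exceptional set $E_i=\{x\in Q: w_i(x)<\langle w_i\rangle_Q/((2n)^{q_i-1}[w_i]_{A_{q_i}})\}$ has measure at most $|Q|/(2n)$, and then integrates $\prod_i w_i^{t_i}$ over the complement $F=Q\setminus\bigcup_i E_i$, which still has measure $\ge|Q|/2$ and on which each $w_i$ is bounded below by a constant times its average. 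Both are standard $A_\infty$ technologies; your route is more compact and handles all positive $t_i$ in one line, at the cost of invoking the (nontrivial, though classical) exponential characterization, whereas the paper's route is self-contained modulo the $A_\infty\subset\bigcup_q A_q$ embedding and yields a constant expressed directly in terms of $A_{q_i}$ characteristics. One small overclaim to note: you say the argument is ``uniform in the signs'' of the $t_i$, but raising an inequality to a negative power would reverse it, so the argument really only works for $t_i>0$ — which is all the lemma requires, so this does not create a gap.
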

\begin{rem}
  The case when $\sum_i t_i\le 1$ was obtained first by Xue and Yan \cite{XY}, and rediscovered recently by Cruz-Uribe and Moen \cite{CUM}. They also referred it as the multilinear Reverse H\"older property. The case $\sum_i t_i> 1$ is actually a simple consequence of the aforementioned result and H\"older's inequality. However, for sake of completeness we provide a direct proof in below. 
\end{rem}
\begin{proof}[Proof of Lemma \ref{lem:RHI}]
  By embedding of $A_\infty$ weights, there exist $q_1,\dots, q_n\in (1,\infty)$ such that $w_i\in A_{q_i}$, $1\le i \le n$. Fix a cube $Q$, let
  \[
    E_i:= \left\{x\in Q: w_i(x)< \frac{\langle w_i\rangle_Q}{(2n)^{q_i-1}[w_i]_{A_{q_i}}}\right\}.
  \]
  Then by the definition of $A_{q_i}$ weights, we have
  \begin{align}\label{eq:smallsize}
    \frac{|E_i|}{|Q|}  \le \frac 1{2n [w_i]_{A_{q_i}}^{\frac 1{q_i-1}}} \langle w_i\rangle_Q^{\frac 1{q_i-1}}\left(\fint_Q w_i^{-\frac 1{q_i-1}} \right)\le \frac 1{2n}.
  \end{align}
  Let $E:=\cup_{i=1}^{n} E_i$ and $F=Q\setminus E$, we have by \eqref{eq:smallsize} that $|F|\ge \frac 12 |Q|$.
  Thus
  \begin{align*}
   \prod_{i=1}^n \left(\fint_Q w_i \right)^{t_i}&\le  \left(\prod_{i=1}^n (2n)^{(q_i-1)t_i}[w_i]_{A_{q_i}}^{t_i}\right) \fint_F \prod_{i=1}^n  w_i^{t_i}\\&\le 2 \left(\prod_{i=1}^n (2n)^{(q_i-1)t_i}[w_i]_{A_{q_i}}^{t_i}\right) \fint_Q \prod_{i=1}^n  w_i^{t_i}.
  \end{align*}
\end{proof}
When $\sum_{i=1}^n t_i>1$, the converse of \eqref{eq:RHI} cannot be obtained through H\"older's inequality. Nevertheless, we record the following lemma, which is useful in the proof of our main results.
\begin{lem}\label{lem:converse}
  Let $t_1,\dots, t_n\in (0,\infty)$ and $w_1,\dots, w_n$ be weights such that $\prod_{i=1}^n w_i^{t_i}\in A_\infty$. Then there is a constant $K$ depending on $[\prod_{i=1}^n w_i^{t_i}]_{A_\infty}$  such that for every cube $Q$,
  \[
    \fint_Q \prod_{i=1}^n w_i^{t_i}\le K \prod_{i=1}^n \left(\fint_Q w_i \right)^{t_i}.
  \]
\end{lem}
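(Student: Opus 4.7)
The plan is to use the exponential characterization of $A_\infty$ (sometimes attributed to Hru\v{s}\v{c}ev): a weight $W$ lies in $A_\infty$ if and only if there exists a constant $C$, depending only on $[W]_{A_\infty}$, such that
\[
\fint_Q W \le C \exp\Bigl(\fint_Q \log W\Bigr)
\]
for every cube $Q$. Applying this with $W := \prod_{i=1}^n w_i^{t_i}$ and using linearity of $\log$ together with multiplicativity of $\exp$ gives
\[
\fint_Q W \le C \exp\Bigl( \sum_{i=1}^n t_i \fint_Q \log w_i \Bigr) = C \prod_{i=1}^n \Bigl( \exp \fint_Q \log w_i \Bigr)^{t_i}.
\]

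Next, Jensen's inequality for the concave function $\log$ yields $\exp(\fint_Q \log w_i) \le \fint_Q w_i$ for each $i$, which is valid for any positive locally integrable $w_i$ and requires no further assumption. Since $t_i \ge 0$, multiplying these bounds together and combining with the previous display immediately produces
\[
\fint_Q \prod_{i=1}^n w_i^{t_i} \le C \prod_{i=1}^n \Bigl( \fint_Q w_i \Bigr)^{t_i},
\]
which is the claim with $K = C$ depending only on $\bigl[ \prod_i w_i^{t_i} \bigr]_{A_\infty}$.

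There is no real obstacle here once one is willing to cite the Hru\v{s}\v{c}ev characterization. If instead one wishes to give a self-contained argument, an alternative route is to set $T := \sum_i t_i$ and first apply the multilinear H\"older inequality with exponents $t_i/T$ summing to $1$ to obtain $(\fint_Q W^{1/T})^T \le \prod_i (\fint_Q w_i)^{t_i}$; the remaining reverse Jensen step $\fint_Q W \le K (\fint_Q W^{1/T})^T$ then follows from the reverse H\"older inequality for $A_\infty$ weights via a standard H\"older interpolation (writing $W = W^a \cdot W^b$ with $a+b = 1$ and cleverly chosen exponents). The regime $T \le 1$ needs no $A_\infty$ assumption at all, and one could even give an $n=1$ trivial reduction via an auxiliary weight equal to $1$; this mirrors the logic in the remark following Lemma \ref{lem:RHI}.
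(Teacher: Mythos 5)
Your argument is correct, and it takes a genuinely different route from the paper. The paper's proof is a short self-contained sublevel-set argument: it removes the exceptional sets $E_i=\{w_i>2n\langle w_i\rangle_Q\}$ by Chebyshev, notes that the remaining set $F$ has $|F|\ge \tfrac12|Q|$, and then invokes the quantitative $A_\infty$ absolute-continuity property $\int_Q W \lesssim_{[W]_{A_\infty}} \int_F W$ for $W=\prod w_i^{t_i}$. You instead invoke the exponential (Fujii--Hru\v{s}\v{c}ev) characterization $\fint_Q W \le C\exp(\fint_Q\log W)$ and then apply Jensen to each $w_i$ separately; this is arguably slicker and more structural, since it makes transparent why the $A_\infty$ hypothesis is needed only on the product $W$. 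One point worth making explicit, though it is not a genuine gap: your computation uses the linearity $\fint_Q\log W=\sum_i t_i\fint_Q\log w_i$, which a priori could run into an $\infty-\infty$ issue since an individual $\fint_Q\log w_i$ could be $-\infty$. This cannot happen here: since each $w_i$ is a weight, $\fint_Q(\log w_i)_+<\infty$, and using the pointwise inequality $(a+b)_+\ge a_+-b_-$ with $a=-t_j\log w_j$, $b=\log W$ one sees that if $\fint_Q(\log w_j)_-=\infty$ for some $j$ then $\sum_{i\ne j}t_i\fint_Q(\log w_i)_+=\infty$, a contradiction; so all $\fint_Q\log w_i$ are finite and the linearity is legitimate. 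Your second, sketched route via multilinear H\"older plus the $A_\infty$ reverse-Jensen inequality $\fint_Q W\le K(\fint_Q W^{1/T})^T$ (with $T=\sum t_i>1$) is also a valid alternative and again isolates precisely where $A_\infty$ enters.
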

\begin{proof}
  Fix a cube $Q$. Let $E_i:=\{x\in Q: w_i(x)> 2n \langle w_i\rangle_Q\}$. Then by Chebyshev it is easy to see that $|E_i|\le \frac 1{2n}|Q|$. Let $F=Q\setminus \cup_{i=1}^n E_i$, then $|F|\ge \frac 12|Q|$ and therefore,
  \begin{align*}
    \int_Q \prod_{i=1}^n w_i^{t_i} & \lesssim_{[\prod_{i=1}^n w_i^{t_i}]_{A_\infty}}  \int_F \prod_{i=1}^n w_i^{t_i}\\&\le (2n)^{\sum_{i=1}^n t_i} |F|\prod_{i=1}^n \left(\fint_Q w_i \right)^{t_i}\le(2n)^{\sum_{i=1}^n t_i} |Q|\prod_{i=1}^n \left(\fint_Q w_i  \right)^{t_i}.
  \end{align*}We are done.
\end{proof}

In the multilinear setting, Lerner, Ombrosi, P\'erez, Torres and Trujillo-Gonz\'alez \cite{LOPTT} introduced the multiple $A_{\vec p}$ weights: we say $\vec w:=(w_1,\dots, w_n)\in A_{\vec p}$ if
\[
  [\vec w]_{A_{\vec p}}:=\sup_Q \left(\fint_Q \prod_{i=1}^{n} w_i^{\frac p{p_i}}\right)\prod_{i=1}^{n}\left(\fint_Q w_i^{1-p_i'}\right)^{\frac{p}{p_i'}}<\infty.
\]It is proved in \cite{LOPTT} that $\vec w\in A_{\vec p}$ if and only if
\begin{equation}\label{eq:multipleweights}
 w:=\prod_{i=1}^{n} w_i^{\frac p{p_i}}\in A_{np}\qquad \mbox{and}\qquad w_i^{1-p_i'}\in A_{np_i'}, \qquad 1\le i \le n.
\end{equation}
We shall use \eqref{eq:multipleweights} frequently.

For our purpose we also record the following result obtained in \cite{LMS} by the author, Moen and Sun.
\begin{prop}\label{prop:lms}
Let $\mathcal A_{\mathcal S}$ be a multilinear sparse operator, $1<p_1,\dots, p_n<\infty$ with $1/p=\sum_{i=1}^n 1/{p_i}$ and $\vec w\in A_{\vec p}$. Then
\[
  \|\mathcal A_{\mathcal S}\|_{L^{p_1}(w_1)\times \cdots \times L^{p_n}(w_n)\to L^p(w)}\le C_{n,d,\vec p,T} [\vec w]_{A_{\vec p}}^{\max \{1, \frac{p_1'}{p},\cdots, \frac{p_n'}{p}\}}.
\]In particular, when $p\le 1$, the following stronger estimate holds
\[
 \Big\| \Big(\sum_{Q\in \mathcal S}\prod_{i=1}^n \langle |f_i|\rangle_Q^p 1_Q\Big)^{\frac 1p}\Big\|_{L^p(w)}\le  C_{n,d,\vec p,T} [\vec w]_{A_{\vec p}}^{\max \{\frac{p_1'}{p},\cdots, \frac{p_n'}{p}\}}\prod_{i=1}^{n}\|f_i\|_{L^{p_i}(w_i)}.
\]
\end{prop}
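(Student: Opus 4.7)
My plan is to adapt the parallel stopping-time approach (originating with Moen in the linear setting) to the multilinear situation. The structural observation making this possible is that, although no individual $w_i$ need belong to a classical Muckenhoupt class, by \eqref{eq:multipleweights} each dual weight $\sigma_i:=w_i^{1-p_i'}$ lies in $A_{np_i'}\subset A_\infty$ and the target weight $w=\prod_i w_i^{p/p_i}$ lies in $A_{np}\subset A_\infty$. Hence the weighted dyadic maximal operators $M^{\sigma_i}$ and $M^w$ are bounded on $L^q(\sigma_i)$ and $L^q(w)$ for every $q>1$, with quantitative $A_\infty$-type constants.

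For $p>1$, I would dualize against $g\in L^{p'}(w)$ and convert Lebesgue averages into weighted averages through the identities $\langle|f_i|\rangle_Q=\langle\sigma_i\rangle_Q\langle|f_i|\sigma_i^{-1}\rangle_Q^{\sigma_i}$ and $\langle g\rangle_Q=\langle w\rangle_Q\langle gw^{-1}\rangle_Q^{w}$. The dualized sum then reads
\[
\sum_{Q\in\mathcal S}\Big(\langle w\rangle_Q\prod_{i=1}^n\langle\sigma_i\rangle_Q\Big)\,\langle gw^{-1}\rangle_Q^{w}\prod_{i=1}^n\langle|f_i|\sigma_i^{-1}\rangle_Q^{\sigma_i}\,|Q|.
\]
Writing the prefactor as $\big(\langle w\rangle_Q\prod_i\langle\sigma_i\rangle_Q^{p/p_i'}\big)\cdot\prod_i\langle\sigma_i\rangle_Q^{1-p/p_i'}$, the first factor is at most $[\vec w]_{A_{\vec p}}$, while the leftover powers of $\langle\sigma_i\rangle_Q$, together with the sparsity $|Q|\le 2|E_Q|$ on pairwise disjoint $E_Q$, are absorbed by a parallel corona decomposition run simultaneously against all $n+1$ measures $\sigma_1,\dots,\sigma_n,w$. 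On principal cubes one applies H\"older's inequality with exponents $(p_1,\dots,p_n,p')$ and the weighted boundedness of $M^{\sigma_i}$ and $M^w$. The sharp exponent $\alpha=\max\{1,p_1'/p,\dots,p_n'/p\}$ emerges because at each generation of the corona only one of the $n+1$ weighted averages advances by a fixed multiplicative factor, so the $A_\infty$-type loss is charged to one direction at a time.

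For the stronger statement when $p\le 1$ duality is unavailable, but since the sets $E_Q$ are disjoint, the pointwise bound
\[
\sum_{Q\in\mathcal S}\prod_{i=1}^n\langle|f_i|\rangle_Q^p\mathbf 1_{E_Q}(x)\le \mathcal M(f_1,\dots,f_n)(x)^p
\]
in terms of the multilinear dyadic maximal function, combined with the weighted sparsity estimate $w(Q)\lesssim w(E_Q)$ (valid because $w\in A_\infty$), reduces the $L^p(w)$ quasi-norm directly to $\|\mathcal M(f_1,\dots,f_n)\|_{L^p(w)}$, which is bounded by $[\vec w]_{A_{\vec p}}^{\max\{p_i'/p\}}\prod_i\|f_i\|_{L^{p_i}(w_i)}$ via the same corona scheme (now without the duality step, which accounts for the disappearance of the $1$ from the maximum). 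The main obstacle I expect is the combinatorial bookkeeping of the parallel stopping time across $n+1$ different measures: at each generation one must choose which weighted maximal operator absorbs the $A_\infty$ cost, and verifying that the optimal charging is compatible with H\"older's inequality at every stage is where sharpness of the exponent $\alpha$ is won or lost.
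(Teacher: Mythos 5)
This proposition is not proved in the paper at all: the author explicitly ``records'' it from \cite{LMS} (Li--Moen--Sun), and there is no argument in the text to compare your sketch against. So the relevant question is whether your blind proposal is itself a viable proof of the cited result, and there are two concrete gaps.

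For the $p\le 1$ case, the reduction to the multilinear maximal function is the right qualitative idea, but the step $w(Q)\lesssim w(E_Q)$ is quantitatively problematic. For a $\tfrac12$-sparse family and a general $w\in A_\infty$, the constant in $w(Q)\le C\,w(E_Q)$ depends \emph{exponentially} on $[w]_{A_\infty}$ (one needs the sparsity parameter $\rho$ close to $1$, at scale $1-\rho\sim 1/[w]_{A_\infty}$, to get a bounded constant; with $\rho=\tfrac12$ the reverse H\"older argument gives an exponential loss). Since $[w]_{A_\infty}\lesssim[\vec w]_{A_{\vec p}}$, this would destroy the claimed polynomial exponent $\max\{p_i'/p\}$. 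To preserve sharpness one must avoid this step, e.g.\ by passing to $w$-sparseness more carefully or by running a Carleson-embedding argument directly in the dual weights $\sigma_i$ as is done in \cite{LMS,DLP}, rather than charging every $Q$ to its $E_Q$ through $w$.

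For the $p>1$ case, the proposal is a plan rather than a proof: the claim that the leftover factors $\prod_i\langle\sigma_i\rangle_Q^{1-p/p_i'}$, the sparsity, and the $n+1$ weighted maximal operators can be organized into a parallel corona that produces exactly the exponent $\max\{1,p_i'/p\}$ is asserted but not carried out, and it is precisely here that the sharp exponent is won or lost. Note also that the exponents $1-p/p_i'$ can be \emph{negative} (e.g.\ $n=2$, $p_1=p_2=4$, $p=2$), so the ``leftover powers of $\langle\sigma_i\rangle_Q$'' are not uniformly favorable and must be redistributed via H\"older in a specific way before any stopping-time absorption is possible. Simply dualizing and then invoking the $p\le1$ estimate for the resulting $(n+1)$-linear form also does not work quantitatively: the form has exponent tuple $(p_1,\dots,p_n,p')$ with $\tilde p=1$, and the resulting bound $[\vec w]^{\max\{p_1',\dots,p_n',p\}}=[\vec w]^{p\max\{1,p_i'/p\}}$ overshoots the target by a factor $p$ in the exponent. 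The actual proof in \cite{LMS} handles this more carefully, so while your outline points in a sensible direction, it is not yet a proof, and in particular does not reproduce the stated sharp exponents.
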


\subsection{BMO spaces}\label{subsec:bmo}
 Let $\nu\in A_\infty$, we say $b\in\BMO_\nu$ if
\[
\|b\|_{\BMO_{\nu}}:=\sup_{Q}\frac {1}{\nu(Q)}\int_Q |b-b_Q|<\infty.
\]In practical cases, $\nu=\mu^{\frac 1p}\lambda^{-\frac 1p}$. In the linear case, there is no need to a priori assume that  $\nu\in A_\infty$ because we always have $\nu \in A_2$, which is easy to verify since $\mu, \lambda \in A_p$. However, in the multilinear case, by \eqref{eq:multipleweights} we only have $\mu^{1-p'}, \lambda^{1-p'}\in A_{np'}$, which implies that $\nu^{\frac 1n}\in A_\infty$, but in general one cannot deduce that $\nu\in A_\infty$-- it can be even not locally integrable. This is easily seen by the following example.
\begin{exmp}\label{exmp:nuainfty}
  For simplicity we provide an example for $n=2$ and $d=1$. Let $(p_1, p_2)=(2,4)$ and $(w_1, w_2)=(|x|^{-2}, |x|^2)$, $(\lambda_1, w_2)=(1, |x|^2)$. It is easy to check that
  \[
    w_1^{\frac p{p_1}} w_2^{\frac p{p_2}}=|x|^{-\frac 23}\in A_{2p}, \,\, w_1^{1-p_1'}=|x|^{2}\in A_{2p_1'},\,\, w_2^{1-p_2'}=|x|^{-\frac 23}\in A_{2p_2'}
  \]and $\lambda_1^{\frac p{p_1}} w_2^{\frac p{p_2}}=|x|^{\frac 23}\in A_{2p}$. With these facts and \eqref{eq:multipleweights} we know that $(w_1, w_2), (\lambda_1, w_2)\in A_{\vec p}$. However, $\nu=w_1^{\frac 1{p_1}}\lambda_1^{-\frac 1{p_1}}=|x|^{-1}\notin L_{\loc}^1(\R)$.
\end{exmp}
Example \ref{exmp:nuainfty} shows that it is reasonable to a priori assume that $\nu \in A_\infty$. In fact, only if $\nu \in A_\infty$ something interesting can happen, for example, Wu \cite{Wu} showed that when $\nu\in A_\infty$ the dual space of $\BMO_\nu$ is
the weighted Hardy space $H^1(\nu)$. Moreover, it was proved by Muckenhoupt and Wheeden that for several interesting estimates related with $b\in\BMO_\nu$, it is necessary to assume $\nu\in A_\infty$ (see \cite{MW}).

 We also define the weighted Bloom $\BMO$: we say $b\in \BMO_\nu(\sigma)$ if
\[
\|b\|_{\BMO_{\nu}(\sigma)}:=\sup_{Q}\frac {1}{\nu\sigma(Q)}\int_Q |b-b_Q| \sigma<\infty.
\]
Recall that when $\nu=1$, it is a classical result by Muckenhoupt and Wheeden \cite{MW} that for any $\sigma\in A_\infty$, one has
\[
\BMO_1=\BMO_1(\sigma).
\]
It would be natural to expect that in the Bloom $\BMO$ setting, one has similar result. We shall show that this is indeed the case, with some natual assumption on the weights.
\begin{lem}\label{lem:equivalence}
Let $\nu, \sigma\in A_\infty$. If $\nu\sigma\in A_\infty$, then $\BMO_\nu=\BMO_\nu(\sigma)$.
\end{lem}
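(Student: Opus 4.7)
The plan is to establish both inclusions $\BMO_\nu\subset \BMO_\nu(\sigma)$ and $\BMO_\nu(\sigma)\subset \BMO_\nu$ with comparable norms. Everything rests on the two-sided estimate
\[
  \nu(Q)\,\sigma(Q)\sim |Q|\,\nu\sigma(Q)\qquad\text{for every cube }Q,
\]
where $\gtrsim$ follows from Lemma \ref{lem:RHI} applied to $\nu,\sigma\in A_\infty$ (with $t_1=t_2=1$) and $\lesssim$ follows from Lemma \ref{lem:converse} applied to the product $\nu\sigma\in A_\infty$. Equivalently $\nu\sigma(Q)/\sigma(Q)\sim \nu(Q)/|Q|$, which lets me pass freely between the two natural normalizing factors appearing in the definitions.

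For $\|b\|_{\BMO_\nu(\sigma)}\lesssim\|b\|_{\BMO_\nu}$ I will H\"older-split $\int_Q|b-b_Q|\sigma\le (\int_Q|b-b_Q|^{r'})^{1/r'}(\int_Q\sigma^r)^{1/r}$ for a reverse-H\"older exponent $r>1$ of $\sigma$; then $(\int_Q\sigma^r)^{1/r}\lesssim \sigma(Q)/|Q|^{1/r'}$ by reverse H\"older for $\sigma\in A_\infty$, and $(\fint_Q|b-b_Q|^{r'})^{1/r'}\lesssim\|b\|_{\BMO_\nu}\,\nu(Q)/|Q|$ by the classical Muckenhoupt--Wheeden $L^{r'}$ self-improvement of $\BMO_\nu$ (valid for any $\nu\in A_\infty$). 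Multiplying, dividing by $\nu\sigma(Q)$, and invoking the key comparison closes this direction.

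The reverse inclusion is subtler. First I replace $b_Q$ by the $\sigma$-mean $b_Q^\sigma=\sigma(Q)^{-1}\int_Q b\sigma$: the elementary bound
\[
  |b_Q-b_Q^\sigma|=\Bigl|\fint_Q^\sigma (b-b_Q)\Bigr|\le\fint_Q^\sigma|b-b_Q|\le\|b\|_{\BMO_\nu(\sigma)}\,\frac{\nu\sigma(Q)}{\sigma(Q)}
\]
shows that the $\sigma$-centered analogue $\sup_Q(\nu\sigma(Q))^{-1}\int_Q|b-b_Q^\sigma|\sigma$ is controlled by $\|b\|_{\BMO_\nu(\sigma)}$. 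Setting $d\tilde\mu=\sigma\,dx$ and $d\tilde\nu=\nu\sigma\,dx=\nu\,d\tilde\mu$, this $\sigma$-centered norm is exactly the Bloom-BMO on the doubling space $(\R^d,d\tilde\mu)$ with weight $\nu=d\tilde\nu/d\tilde\mu$. The reverse-H\"older inequalities for $\sigma$ and for $\nu\sigma$ chain together to give $\nu\in A_\infty(d\tilde\mu)$ (i.e. $\sigma(E)/\sigma(Q)$ small forces $\nu\sigma(E)/\nu\sigma(Q)$ small, via the intermediate smallness of $|E|/|Q|$). Hence the Muckenhoupt--Wheeden $L^q$ self-improvement in the doubling setting yields
\[
  \Bigl(\fint_Q^\sigma|b-b_Q^\sigma|^q\Bigr)^{1/q}\lesssim\|b\|_{\BMO_\nu(\sigma)}\,\frac{\nu\sigma(Q)}{\sigma(Q)},\qquad q\in[1,\infty).
\]
A H\"older argument using $\sigma\in A_q$ (for any $q$ with $\sigma\in A_q$) converts the $\sigma$-average to a Lebesgue average, $\fint_Q|b-b_Q^\sigma|\lesssim(\fint_Q^\sigma|b-b_Q^\sigma|^q)^{1/q}$; the triangle bound $\fint_Q|b-b_Q|\le 2\fint_Q|b-b_Q^\sigma|$ restores $b_Q$ in place of $b_Q^\sigma$, and the key comparison converts $\nu\sigma(Q)/\sigma(Q)$ back to $\nu(Q)/|Q|$.

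The main obstacle is the weighted John--Nirenberg step in the harder direction. Recognising the $\sigma$-centered $\BMO_\nu(\sigma)$ as a Bloom-BMO on the doubling space $(\R^d,\sigma\,dx)$ and certifying $\nu\in A_\infty(\sigma\,dx)$ is precisely where the hypothesis $\nu\sigma\in A_\infty$ (beyond $\nu,\sigma\in A_\infty$ individually) enters in an essential way; without it one cannot run the Calder\'on--Zygmund stopping in $\sigma$-measure and iterate to obtain uniform geometric decay across scales.
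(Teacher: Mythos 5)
Your opening observation — the two-sided comparison $\nu(Q)\sigma(Q)\sim|Q|\,\nu\sigma(Q)$ coming from Lemma \ref{lem:RHI} and Lemma \ref{lem:converse} — is correct and is also the key algebraic input in the paper's proof. However, the route you build on top of it has a genuine gap, and it appears in both inclusions.

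The problem is the alleged ``classical Muckenhoupt--Wheeden $L^{r'}$ self-improvement of $\BMO_\nu$, valid for any $\nu\in A_\infty$,'' namely $\bigl(\fint_Q|b-b_Q|^{r'}\bigr)^{1/r'}\lesssim\|b\|_{\BMO_\nu}\,\langle\nu\rangle_Q$ for all $r'<\infty$, and its $\sigma$-measure analogue in the second inclusion. This is false for general $\nu\in A_\infty$. Take $d=1$, $\nu(x)=|x|^{-1/2}\in A_1\subset A_\infty$, and $b=\nu$. Then $\int_Q|b-b_Q|\le 2\nu(Q)$ for every $Q$, so $b\in\BMO_\nu$ with $\|b\|_{\BMO_\nu}\le 2$; but for $Q=[0,\varepsilon]$ and any $q\ge 2$, $\int_Q|b-b_Q|^q\,dx=+\infty$ while $\langle\nu\rangle_Q\simeq\varepsilon^{-1/2}<\infty$. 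So the unweighted $L^q$ self-improvement already fails at $q=2$. What is actually true (and is what the paper's Theorem \ref{thm:jn} gives after specializing) is of the form $\bigl(\langle|b-b_Q|^{q}\rangle_Q\bigr)^{1/q}\lesssim\|b\|_{\BMO_\nu}\langle\nu^{q}\rangle_Q^{1/q}$, which reduces to your statement only when $\nu\in\RH_{q}$. In your H\"older split you need simultaneously $\sigma\in\RH_r$ and this self-improvement at exponent $r'$, i.e.\ $\nu\in\RH_{r'}$; since $r,r'$ are conjugate, this amounts to requiring $\epsilon_\sigma\,\epsilon_\nu\gtrsim 1$ for the reverse-H\"older gains, which fails for pairs like $\nu=|x|^{a}$, $\sigma=|x|^{-a}$ with $a$ close to $1$ (note $\nu\sigma\equiv 1\in A_\infty$, so the lemma must still hold there). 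The same obstruction reappears in your harder direction, where you invoke the $L^q$ self-improvement on the doubling space $(\R^d,\sigma\,dx)$ ``for all $q\in[1,\infty)$'': it again needs $\nu\in\RH_q(\sigma\,dx)$, which only holds for $q$ close to $1$, whereas $\sigma\in A_q(dx)$ generally forces $q$ large, and there is no reason the two ranges overlap.

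The paper's proof sidesteps exactly this difficulty by staying at the $L^1$ level. It uses the Calder\'on--Zygmund/sparse decomposition of Proposition \ref{prop:sparsecontrol} (and its unweighted version) to write $|b-b_{Q_0}|1_{Q_0}\lesssim\sum_{P\in\mathcal S(Q_0)}\langle|b-b_P|\rangle_P 1_P$, bounds each local mean oscillation by $\|b\|_{\BMO_\nu}\langle\nu\rangle_P$, and then closes with the RHI comparison $\langle\nu\rangle_P\sigma(P)\lesssim\nu\sigma(P)$ and the $A_\infty$ Carleson property $\sum_P\nu\sigma(P)\lesssim\nu\sigma(Q_0)$ (respectively $\sum_P\nu(P)\lesssim\nu(Q_0)$ for the converse, using Lemma \ref{lem:converse}). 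No power of $|b-b_Q|$ ever needs to be integrated, so no reverse-H\"older constraint on $\nu$ beyond $A_\infty$ enters. If you want to pursue a John--Nirenberg route, you would need the correctly weighted $L^q$ variants — as in Theorem \ref{thm:jn} — rather than the unweighted $L^{r'}$ bound, and you would still have to make the exponents match the H\"older split; the sparse argument avoids the issue entirely and is both shorter and sharper.
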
We will see that in practical cases, we always have $\nu \sigma\in A_\infty$ for free.
To prove this lemma, we shall need the following proposition, whose proof is standard, but for sake of completeness, we give the full details.
\begin{prop}\label{prop:sparsecontrol}
Let $\sigma\in A_\infty$. Then there holds
\begin{align}\label{eq:sparse}
 |b-b_{Q_0}^\sigma|  1_{Q_0}\lesssim  \sum_{Q\in \mathcal S(Q_0)}\langle | b-b_Q^\sigma|\rangle_Q^\sigma 1_Q,
\end{align}where $\mathcal S(Q_0)$ is a sparse collection with respect to $\sigma$ and with all elements  contained in $Q_0$.
\end{prop}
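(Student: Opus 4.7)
The plan is to build $\mathcal{S}(Q_0)$ by a standard Calder\'on--Zygmund stopping-time argument, but with everything measured in $\sigma$ rather than in Lebesgue measure, and then to verify sparsity and the pointwise bound in the usual way.

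\textbf{Stopping-time construction.} Fix a large constant $C_0>1$ to be chosen. Set $\mathcal{S}_0=\{Q_0\}$, and given $\mathcal{S}_k$, for each $Q\in\mathcal{S}_k$ let the $\sigma$-children of $Q$ be the maximal dyadic subcubes $Q'\subset Q$ such that
\[
  \langle |b-b_Q^\sigma|\rangle_{Q'}^\sigma > C_0\,\langle |b-b_Q^\sigma|\rangle_Q^\sigma.
\]
Let $\mathcal{S}_{k+1}$ be the union of all such $\sigma$-children over $Q\in\mathcal{S}_k$, and put $\mathcal{S}(Q_0):=\bigcup_{k\ge 0}\mathcal{S}_k$. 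For $Q\in\mathcal{S}(Q_0)$ let $E_Q:=Q\setminus \bigcup\{Q':Q'\text{ is a }\sigma\text{-child of }Q\}$. The $E_Q$'s are pairwise disjoint and, provided $C_0$ is large enough, a standard weak-$(1,1)$ argument for the $\sigma$-weighted dyadic maximal operator $M^{\sigma}_{Q}f:=\sup_{Q'\ni\cdot,\,Q'\subset Q}\langle |f|\rangle_{Q'}^\sigma$ (which is weak-$(1,1)$ for any Borel measure) shows $\sigma(\bigcup_{Q'}Q')\le \tfrac12\sigma(Q)$, hence $\sigma(E_Q)\ge\tfrac12\sigma(Q)$. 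Thus $\mathcal{S}(Q_0)$ is $\sigma$-sparse.

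\textbf{Telescoping with $A_\infty$-doubling.} For $x\in E_Q$, Lebesgue differentiation (with respect to the doubling measure $\sigma\,dx$, using $\sigma\in A_\infty$) together with the stopping condition gives
\[
  |b(x)-b_Q^\sigma|\le C_0\,\langle |b-b_Q^\sigma|\rangle_Q^\sigma
\]
$\sigma$-a.e.\ on $E_Q$. To transfer this from $b_Q^\sigma$ to $b_{Q_0}^\sigma$, I would walk up the stopping tree. If $Q'\in\mathcal{S}(Q_0)$ is a $\sigma$-child of $Q$, with dyadic parent $\widehat{Q'}$, then $\widehat{Q'}\not\subsetneq Q$ fails the stopping inequality, so $\langle |b-b_Q^\sigma|\rangle_{\widehat{Q'}}^\sigma\le C_0\langle |b-b_Q^\sigma|\rangle_Q^\sigma$. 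Since $\sigma\in A_\infty$ is doubling, $\sigma(\widehat{Q'})\lesssim\sigma(Q')$, so
\[
  |b_{Q'}^\sigma-b_Q^\sigma|\le \langle |b-b_Q^\sigma|\rangle_{Q'}^\sigma\lesssim \langle |b-b_Q^\sigma|\rangle_{\widehat{Q'}}^\sigma\lesssim \langle |b-b_Q^\sigma|\rangle_Q^\sigma.
\]
Iterating along the chain $Q_0\supsetneq Q_1\supsetneq\cdots\supsetneq Q_m=Q$ of stopping ancestors yields
\[
  |b_Q^\sigma-b_{Q_0}^\sigma|\lesssim \sum_{j=0}^{m-1}\langle |b-b_{Q_j}^\sigma|\rangle_{Q_j}^\sigma.
\]

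\textbf{Assembling the bound.} Since the $E_Q$'s partition $Q_0$ ($\sigma$-a.e.) and $\sum_{Q\in\mathcal{S}(Q_0),\,Q\subset Q_j}1_{E_Q}=1_{Q_j}$, summing the pointwise inequality $|b-b_{Q_0}^\sigma|1_{E_Q}\le |b-b_Q^\sigma|1_{E_Q}+|b_Q^\sigma-b_{Q_0}^\sigma|1_{E_Q}$ over $Q\in\mathcal{S}(Q_0)$ and using the two displays above gives
\[
  |b-b_{Q_0}^\sigma|1_{Q_0}\lesssim \sum_{Q\in\mathcal{S}(Q_0)}\langle |b-b_Q^\sigma|\rangle_Q^\sigma 1_Q,
\]
which is \eqref{eq:sparse}.

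The main technical point is the $A_\infty$-doubling step that converts the stopping inequality on the parent $\widehat{Q'}$ into a comparable bound on $Q'$ itself; everything else is bookkeeping that mirrors the unweighted sparse control of $b-b_{Q_0}$.
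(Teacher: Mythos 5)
Your proof is correct and follows essentially the same approach as the paper's: a Calder\'on--Zygmund stopping-time decomposition taken with respect to the measure $\sigma\,dx$, with the $\sigma$-sparsity coming from Chebyshev and the transfer from the parent average to the child's using the doubling of $\sigma\in A_\infty$. The paper phrases it as a single CZ step at height $\alpha=2\langle|b-b_{Q_0}^\sigma|\rangle_{Q_0}^\sigma$ followed by recursion, while you build the whole stopping tree at once and then telescope along the chain of stopping ancestors; these are the same argument presented in recursive versus unrolled form.
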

\begin{proof}
Let
$\alpha=2\langle |b- b_{Q_0}^\sigma| \rangle_{Q_0}^\sigma
$. Form the Calder\'on-Zygmund decomposition of $|b-b_{Q_0}^\sigma|  1_{Q_0}$ at height $\alpha$ with respect to $\sigma$, we get a collection of maximal cubes $\{Q_j\}$ in $\mathcal D(Q_0)$ with the property that
\[
\langle | b- b_{Q_0}^\sigma| \rangle_{Q_j}^\sigma >\alpha.
\]Denote $E:=Q_0\setminus \cup_j Q_j$, we have $|b-b_{Q_0}^\sigma|  1_{E}\le \alpha$.
By maximality,
\begin{equation}\label{eq:maxi}
\langle | b-b_{Q_0}^\sigma|\rangle_{Q_j^{(1)}}^\sigma\le \alpha.
\end{equation}Hence
\begin{align*}
\sum_{j}\sigma(Q_j)&<\alpha^{-1} \sum_{j} \int_{Q_j} |b-b_{Q_0}^\sigma|\sigma
\le \frac 12 \sigma(Q_0).
\end{align*}Finally we are able to write
\begin{align*}
 |b-b_{Q_0}^\sigma|  1_{Q_0}&\le  |b-b_{Q_0}^\sigma|  1_{E}+ \sum_j |b-b_{Q_0}^\sigma|  1_{Q_j}\\
 &\le \alpha 1_E+\sum_{j}|b_{Q_j}^\sigma -b_{Q_0}^\sigma|1_{Q_j}+ \sum_{j}|b-b_{Q_j}^\sigma |1_{Q_j}\\
 &\le (D_\sigma +1) \alpha 1_{Q_0} + \sum_{j}|b-b_{Q_j}^\sigma |1_{Q_j},
\end{align*}where in the last step we have used \eqref{eq:maxi} and the doubling property of $\sigma$ (the doubling constant depends on $[\sigma]_{A_\infty}$).
Then \eqref{eq:sparse} is concluded by the above recursive inequality.
\end{proof}
\begin{rem}\label{rem:altdef}
  Proposition \ref{prop:sparsecontrol} provides an alternative definition of $\BMO_\nu(\sigma)$, that is, we may define 
  \[
   \|b\|_{ \wt{\BMO}_\nu(\sigma)}:=\sup_{Q}\frac {1}{\nu\sigma(Q)}\int_Q |b-b_Q^\sigma| \sigma.
  \]
  Indeed, $ \|b\|_{ \wt{\BMO}_\nu(\sigma)}\lesssim  \|b\|_{ {\BMO}_\nu(\sigma)}$ is trivial. So we only show the other direction. We have
  \begin{align*}
 \frac {1}{\nu\sigma(Q)}\int_Q |b-b_Q| \sigma\le \frac {1}{\nu\sigma(Q)}\int_Q |b-b_Q^\sigma| \sigma+ \frac {\sigma(Q)}{\nu\sigma(Q)} |b_Q-b_Q^\sigma|.
  \end{align*}  
  By Proposition \ref{prop:sparsecontrol} we have 
  \begin{align*}
|b_Q-b_Q^\sigma|\le \frac 1{|Q|}\int_Q |b-b_Q^\sigma|&\lesssim \|b\|_{ \wt{\BMO}_\nu(\sigma)}\frac 1{|Q|}\sum_{P\in \mathcal S(Q)}\frac{\nu\sigma(P)}{\sigma(P)}|P|\\
&\lesssim  \|b\|_{ \wt{\BMO}_\nu(\sigma)}\frac 1{|Q|}\sum_{P\in \mathcal S(Q)}\nu(P)\lesssim \|b\|_{ \wt{\BMO}_\nu(\sigma)} \langle \nu\rangle_Q,
  \end{align*}where we have used Lemma \ref{lem:converse} in the third inequality. The argument is concluded now by using Lemma \ref{lem:RHI}. 
\end{rem}

\begin{proof}[Proof of Lemma \ref{lem:equivalence}]
We first show that $\BMO_\nu \subset \BMO_\nu(\sigma)$. Let $b\in \BMO_\nu$. Fix a cube $Q_0$, by Proposition \ref{prop:sparsecontrol},
\[
|b-b_{Q_0}|1_{Q_0} \lesssim \sum_{Q\in \mathcal S(Q_0)}\langle | b-b_Q|\rangle_Q 1_Q,
\] where $\mathcal S(Q_0)$ is a sparse family contained in $Q_0$. Then we have
\begin{align*}
\frac {1}{\nu\sigma(Q_0)}\int_{Q_0} |b-b_{Q_0}| \sigma &\lesssim \|b\|_{\BMO_\nu}\frac {1}{\nu\sigma(Q_0)} \sum_{Q\in \mathcal S(Q_0)}\langle \nu\rangle_Q \sigma(Q)\\
&\lesssim   \|b\|_{\BMO_\nu}\frac {1}{\nu\sigma(Q_0)} \sum_{Q\in \mathcal S(Q_0)}\nu\sigma(Q)\lesssim  \|b\|_{\BMO_\nu},
\end{align*}where in the second inequality we have used Lemma \ref{lem:RHI}, and in the last step we have used $\nu\sigma\in A_\infty$. It remains to prove the converse direction. Let $b\in \BMO_\nu(\sigma)$ and fix a cube $Q_0$. By \eqref{eq:sparse}, we have
\begin{align*}
\frac 1{\nu(Q_0)}\int_{Q_0} |b- b_{Q_0}| & \le \frac 1{\nu(Q_0)}\int_{Q_0} |b- b_{Q_0}^\sigma|+\frac{|Q_0|}{\nu(Q_0)}|b_{Q_0}-b_{Q_0}^\sigma|\\
&\le  \frac 2{\nu(Q_0)}\int_{Q_0} |b- b_{Q_0}^\sigma|\\
&\lesssim \frac 2{\nu(Q_0)}\sum_{Q\in \mathcal S(Q_0)} \langle | b-b_Q^\sigma|\rangle_Q^\sigma |Q|\\
&\le \|b\|_{\BMO_\nu(\sigma)}\frac 4{\nu(Q_0)}\sum_{Q\in \mathcal S(Q_0)} \frac{\nu\sigma(Q)}{\sigma(Q)}|Q|,
\end{align*}
where we have used the fact that
\[
 \langle | b-b_Q^\sigma|\rangle_Q^\sigma\le  \langle | b-b_Q|\rangle_Q^\sigma + \langle | b_Q-b_Q^\sigma|\rangle_Q^\sigma\le 2 \langle | b-b_Q|\rangle_Q^\sigma.
\]
Now since $\nu\sigma\in A_\infty$, by Lemma \ref{lem:converse} we have
\[
\langle \nu\sigma \rangle_Q \lesssim  \langle \nu\rangle_Q \langle \sigma \rangle_Q .
\]Hence
\begin{align*}
\sum_{Q\in \mathcal S(Q_0)} \frac{\nu\sigma(Q)}{\sigma(Q)}|Q|\lesssim   \sum_{Q\in \mathcal S(Q_0)} \nu(Q)\lesssim  \nu(Q_0)
\end{align*}and we are done.
\end{proof}
John-Nirenberg inequality is a key feature of BMO spaces, which has independent interest. We record the John-Nirenberg inequality in our setting as the following:
\begin{thm}\label{thm:jn}
Given $\mu, \lambda$ and $1<q<\infty$ such that $\eta:= \lambda^{1-q'}, \sigma:=\mu^{1-q'}\in A_\infty$. Let $\nu=\mu^{\frac 1q}\lambda^{-\frac 1q}$. Then
\[
\sup_Q\left( \frac 1{\eta(Q)}\int_Q |b-b_Q|^{q'}\sigma\right)^{\frac 1{q'}}\sim \|b\|_{\BMO_{\nu}}.
\]
\end{thm}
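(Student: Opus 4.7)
The plan is to establish the two inequalities separately. The direction $\|b\|_{\BMO_\nu}\lesssim\sup_Q(\cdot)^{1/q'}$ will be reduced to H\"older's inequality followed by a reverse H\"older comparison of cube averages, while the reverse direction---the genuine John--Nirenberg content---will be handled by sparse domination with respect to $\sigma$-averages combined with an $L^{q'}(\sigma)$-bound for the resulting $\sigma$-sparse operator.

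For the direction $\|b\|_{\BMO_\nu}\lesssim\sup_Q(\cdot)^{1/q'}$, I will fix a cube $Q$ and factor $|b-b_Q|=|b-b_Q|\sigma^{1/q'}\cdot\sigma^{-1/q'}$. H\"older's inequality with exponents $q'$ and $q$, together with the identity $(1-q')(1-q)=1$ which rewrites $\sigma^{1-q}=\mu$, yields
\[
  \int_Q|b-b_Q|\le\Big(\int_Q|b-b_Q|^{q'}\sigma\Big)^{1/q'}\mu(Q)^{1/q}.
\]
Dividing by $\nu(Q)$, the estimate reduces to the reverse H\"older comparison $\mu(Q)^{1/q}\eta(Q)^{1/q'}\lesssim\nu(Q)$. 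Since $\lambda^{-1/q}=\eta^{1/q'}$ gives the pointwise identity $\mu^{1/q}\eta^{1/q'}=\nu$, this is a direct application of Lemma~\ref{lem:RHI} to $w_1=\mu$, $w_2=\eta$, $t_1=1/q$, $t_2=1/q'$; the lemma requires $\mu\in A_\infty$, which is automatic in the multilinear applications (in Theorem~\ref{thm:main1} both $\mu$ and $\lambda$ belong to $A_{np}\subset A_\infty$ by~\eqref{eq:multipleweights}).

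For the reverse direction $\sup_Q(\cdot)^{1/q'}\lesssim\|b\|_{\BMO_\nu}$, I will first observe that $\nu\sigma=\sigma^{1/q}\eta^{1/q'}\in A_\infty$, which follows by combining Lemmata~\ref{lem:RHI} and~\ref{lem:converse} applied to the two $A_\infty$ weights $\sigma$ and $\eta$ to show that $\nu\sigma$ satisfies a reverse H\"older inequality. Consequently Lemma~\ref{lem:equivalence} and Remark~\ref{rem:altdef} yield $\|b\|_{\BMO_\nu}\sim\|b\|_{\wt{\BMO}_\nu(\sigma)}$. I then fix $Q_0$ and apply Proposition~\ref{prop:sparsecontrol} with $\sigma$-averages to obtain a $\sigma$-sparse family $\mathcal{S}(Q_0)\subset Q_0$ with
\[
  |b-b_{Q_0}^\sigma|\mathbf{1}_{Q_0}\lesssim\sum_{Q\in\mathcal{S}(Q_0)}\langle|b-b_Q^\sigma|\rangle_Q^\sigma\mathbf{1}_Q,
\]
bounding each coefficient by $\|b\|_{\BMO_\nu}\langle\nu\rangle_Q^\sigma$ via the $\wt{\BMO}_\nu(\sigma)$-norm. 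Raising to the $q'$-th power and integrating against $\sigma$, everything reduces to the key inequality
\[
  \int_{Q_0}\Big(\sum_{Q\in\mathcal{S}(Q_0)}\langle\nu\rangle_Q^\sigma\mathbf{1}_Q\Big)^{q'}\sigma\lesssim\int_{Q_0}\nu^{q'}\sigma=\eta(Q_0),
\]
which is the $L^{q'}(\sigma)$-boundedness of the $\sigma$-sparse operator applied to $\nu$; this follows from pointwise domination by the $\sigma$-dyadic maximal function together with Doob's inequality, or more directly by a duality argument exploiting the sparsity relation $\sigma(Q)\le 2\sigma(E_Q)$ with $\{E_Q\}$ pairwise disjoint. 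The final passage from $b_{Q_0}^\sigma$ to $b_{Q_0}$ is routine, using the triangle inequality together with the pointwise identity $\mu^{1/q}\sigma^{1/q'}=1$.

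The main obstacle I foresee is the $L^{q'}(\sigma)$-boundedness of the $\sigma$-sparse operator and, throughout, the careful book-keeping of Lebesgue versus $\sigma$-averages; a secondary technical point is the tacit requirement that $\mu\in A_\infty$ in the reverse H\"older step of the first direction, which is implicit in all intended applications of the theorem.
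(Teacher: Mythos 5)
Your proof of the direction $\sup_Q(\cdot)^{1/q'}\lesssim\|b\|_{\BMO_\nu}$ is sound and essentially matches the paper's: both reduce to a $\sigma$-sparse pointwise control of the oscillation, bound the sparse coefficients by $\|b\|_{\BMO_\nu}\langle\nu\rangle_Q^\sigma$ via the reverse-H\"older inequality (using only $\sigma,\eta\in A_\infty$), and conclude from the $L^{q'}(\sigma)$-boundedness of the $\sigma$-sparse operator together with the identity $\nu^{q'}\sigma=\eta$. The only difference is cosmetic: the paper decomposes against Lebesgue averages $b_Q$ and switches to $\sigma$-averages of $\nu$ afterwards, while you decompose against $\sigma$-averages $b_Q^\sigma$ from the outset.

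Your first direction, $\|b\|_{\BMO_\nu}\lesssim\sup_Q(\cdot)^{1/q'}$, has a genuine gap. After H\"older you are left needing $\langle\mu\rangle_Q^{1/q}\langle\eta\rangle_Q^{1/q'}\lesssim\langle\nu\rangle_Q$, which through Lemma~\ref{lem:RHI} requires $\mu\in A_\infty$. But $\mu\in A_\infty$ is \emph{not} a hypothesis of Theorem~\ref{thm:jn}, and it does not follow from $\sigma=\mu^{1-q'}\in A_\infty$, $\eta\in A_\infty$, nor from $\nu\in A_\infty$: with $d=1$, $q=2$, $\mu=|x|^{-2}$, $\lambda=|x|^{-3}$ one has $\sigma=|x|^2$, $\eta=|x|^3$, $\nu=|x|^{1/2}$ all in $A_\infty$, while $\mu$ is not even locally integrable, so $\mu(Q)=\infty$ for cubes through the origin and your H\"older step collapses. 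Your justification that $\mu\in A_\infty$ holds in the multilinear applications also misreads \eqref{eq:multipleweights}: for $\vec w\in A_{\vec p}$ that equivalence yields only $w_i^{1-p_i'}\in A_{np_i'}$ (i.e.\ $\sigma\in A_\infty$) and $\prod_j w_j^{p/p_j}\in A_{np}$; it says nothing about $w_i$ itself, and the paper's Example~\ref{exmp:nuainfty} with $w_1=|x|^{-2}\notin L^1_{\loc}$ shows that $w_i\in A_\infty$ can genuinely fail. Avoiding any assumption on $\mu$ is precisely why the theorem is formulated in terms of $\sigma$ and $\eta$. The paper's proof avoids the problem by applying H\"older to the $\sigma$-weighted oscillation $\frac{1}{\nu\sigma(Q)}\int_Q|b-b_Q|\sigma$ rather than to $\frac{1}{\nu(Q)}\int_Q|b-b_Q|$: the conjugate factor then becomes $\sigma(Q)^{1/q}$, so the needed reverse-H\"older inequality $\langle\sigma\rangle_Q^{1/q}\langle\eta\rangle_Q^{1/q'}\lesssim\langle\sigma^{1/q}\eta^{1/q'}\rangle_Q=\langle\nu\sigma\rangle_Q$ uses only the given $\sigma,\eta\in A_\infty$. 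This controls $\|b\|_{\BMO_\nu(\sigma)}$, and Lemma~\ref{lem:equivalence} then transfers the bound to $\|b\|_{\BMO_\nu}$. You should restructure your first direction this way.
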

\begin{proof}
We first prove the $\gtrsim$ direction. Indeed, by H{\"o}lder's inequality, we see that
\[
\frac 1{\nu\sigma(Q)} \int_Q |b-b_Q|\sigma \le \frac 1{\nu\sigma(Q)} \Big(\int_Q |b-b_Q|^{q'}\sigma \Big)^{\frac 1{q'}}\sigma(Q)^{\frac 1q}\lesssim \left( \frac 1{\eta(Q)}\int_Q |b-b_Q|^{q'}\sigma\right)^{\frac 1{q'}},
\]
where in the last step we have used by Lemma \ref{lem:RHI} that
\[
\langle \sigma \rangle_Q^{\frac 1q}\langle \eta\rangle_Q^{\frac 1{q'}}\lesssim  \big\langle \sigma^{\frac 1q}\eta^{\frac 1{q'}}\big\rangle_Q=  \langle \nu\sigma \rangle_Q.
\]
Next we prove the $\lesssim$ direction. Again, we use
\begin{align*}
|b-b_Q|1_Q \lesssim \sum_{I\in \mathcal S(Q)}\langle |b-b_I|\rangle_I 1_I\le \|b\|_{\BMO_\nu}\sum_{I\in \mathcal S(Q)}\langle \nu\rangle_I 1_I\lesssim  \|b\|_{\BMO_\nu}\sum_{I\in \mathcal S(Q)}\langle \nu\rangle_I^\sigma 1_I,
\end{align*}where in the last step we have used
\[
\langle \nu \rangle_I \lesssim \frac{\langle \nu \sigma \rangle_I}{\langle \sigma\rangle_I},
\]which is again, due to Lemma \ref{lem:RHI}.
Since $\sigma\in A_\infty$, the sparseness with respect to Lebesgue measure is equivalent with the sparseness with respect to $\sigma$. Thus using the boundedness of sparse operator we have
\begin{align*}
\int_Q |b-b_Q|^{q'} \sigma \lesssim \|b\|_{\BMO_\nu}^{q'}\int_Q \nu^{q'} \sigma= \|b\|_{\BMO_\nu}^{q'}\eta(Q)
\end{align*}and we are done.
\end{proof}
\section{Proof of Theorem \ref{thm:main1}}\label{sec:proofmain1}
Our proof is based on sparse domination technique. For this purpose we record the following result, which is more general than \cite[Proposition 2.1]{KO}.
\begin{prop}\label{prop:sparsedom}
 Let  $T$ be an $n$-linear Calder\'on-Zygmund operator, $\mathcal I \subset \{1,\dots, n\}$, $1\le k_i<\infty$ and $C_{b_{\mathcal I}}^{k_{\mathcal I}}(T)$ be defined as in \eqref{eq:itercommu}. Let $f_1,\dots, f_n$ be compactly supported functions. Then there exist $3^d$ sparse collections $\mathcal S_m$, $1\le m\le 3^d$, such that
 \begin{align*}
 |C_{b_{\mathcal I}}^{k_{\mathcal I}}(T)(f_1,\dots, f_n)|\lesssim \sum_{m=1}^{3^d}\sum_{\vec \gamma \in \{1,2\}^{L}}\mathcal A_{\mathcal S_m, b_{\mathcal I}}^{\vec \gamma}(f_1,\dots, f_n),
 \end{align*}where $L=\sum_{i\in \mathcal I}k_i$ and for fixed $\vec \gamma$,
\begin{align*}
& \mathcal A_{\mathcal S_m, b_{\mathcal I}}^{\vec \gamma}(f_1,\dots, f_n)\\&=\sum_{Q\in \mathcal S_m} \prod_{i\in \mathcal I}   \Big\langle  |f_i|\prod_{\ell_i\in B_i}  |b^i_{\ell_i}-\langle b^i_{\ell_i}\rangle_Q|\Big\rangle_Q \Big(\prod_{j\notin \mathcal I}\langle |f_j|\rangle_Q \Big)\prod_{i\in \mathcal I}\prod_{\ell_i\in A_i}  |b^i_{\ell_i}-\langle b^i_{\ell_i}\rangle_Q| 1_Q,
\end{align*}where
\[
A_i=\{\ell_i: \gamma_{\ell_i}=1\},\qquad B_i=\{\ell_i: \gamma_{\ell_i}=2\}.
\]
\end{prop}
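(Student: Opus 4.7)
My approach combines the pointwise sparse bound for multilinear CZOs of \cite{DHL} recorded above with a per-cube algebraic expansion of the iterated commutator and a standard stopping-time construction.

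The algebraic ingredient is the identity: for any family of constants $\{\lambda^i_\ell : i \in \mathcal I,\ 1 \le \ell \le k_i\}$,
\[
C^{k_{\mathcal I}}_{b_{\mathcal I}}(T)(\vec f) = \sum_{\vec \gamma \in \{1,2\}^L} \pm \prod_{i \in \mathcal I} \prod_{\ell \in A_i}\bigl(b^i_\ell - \lambda^i_\ell\bigr)\cdot T\Bigl(f_1, \dots, \prod_{\ell \in B_i}\bigl(b^i_\ell - \lambda^i_\ell\bigr)f_i, \dots, f_n\Bigr),
\]
where $L = \sum_{i \in \mathcal I} k_i$ and $A_i, B_i$ are as in the statement. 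This is proved by induction on $L$, repeatedly writing $b^i_\ell = (b^i_\ell - \lambda^i_\ell) + \lambda^i_\ell$ inside each $[b^i_\ell, \cdot]_i$ and using that constants may be moved freely across $T$. The fact that commutators in distinct slots of $\mathcal I$ commute is what lets the expansion factor as a product over $i \in \mathcal I$.

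Next, fix one of the $3^d$ shifted dyadic grids $\calD$ and a top cube $Q_0 \in \calD$ containing the supports of the $f_j$. I would build a $\tfrac12$-sparse family $\mathcal S_m \subset \calD(Q_0)$ by a principal-cube iteration. At each principal cube $Q$, apply the identity with $\lambda^i_\ell := \langle b^i_\ell \rangle_Q$, and take as next-generation principal cubes the maximal dyadic subcubes $Q' \subsetneq Q$ on which, for some $\vec\gamma$, either $\langle |f_j|\rangle_{Q'}$ exceeds a large fixed multiple of $\langle |f_j|\rangle_Q$ for some $j \notin \mathcal I$, or $\bigl\langle |f_i| \prod_{\ell \in B_i} |b^i_\ell - \langle b^i_\ell\rangle_Q| \bigr\rangle_{Q'}$ exceeds the corresponding threshold for some $i \in \mathcal I$. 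A Chebyshev estimate via the dyadic maximal function, applied to the finitely many $\vec\gamma$ and subsets $B_i$, bounds the total measure of these stopping children by $|Q|/2$, yielding the required sparseness.

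On the complement $Q \setminus \bigcup Q'$ every relevant average is comparable to the corresponding average over $Q$. A local weak-type estimate for $T$ applied to the inside tuple $\bigl(f_1 1_Q, \dots, \prod_{\ell \in B_i}(b^i_\ell - \langle b^i_\ell\rangle_Q) f_i 1_Q, \dots, f_n 1_Q\bigr)$, together with kernel decay to control inputs supported outside $3Q$ (both as in the proof of the CZO sparse bound in \cite{DHL}), contributes exactly one sparse summand anchored at $Q$, while the outside factors $\prod_{\ell \in A_i}|b^i_\ell - \langle b^i_\ell\rangle_Q|$ attach as pointwise multipliers. Iterating over all principal cubes and summing over the $3^d$ grids yields the claim. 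The main obstacle is that the centering constants $\lambda^i_\ell$ depend on the current principal cube, so a naive linear iteration of the base sparse bound for $T$ is not available; the stopping-time selection must therefore be performed simultaneously across all $2^L$ summands indexed by $\vec\gamma$, so that a single sparse family serves all of them.
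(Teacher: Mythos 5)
Your algebraic expansion of $C^{k_{\mathcal I}}_{b_{\mathcal I}}(T)$ over arbitrary centering constants $\lambda^i_\ell$ is correct, and your overall plan --- fix one of the $3^d$ shifted grids, run a principal-cube iteration, recenter the $b^i_\ell$ at the average over the current principal cube --- is exactly the Lerner--Ombrosi--Rivera-R\'ios scheme as adapted to the multilinear/iterated setting in \cite{IR} and \cite{KO}, which is what the paper invokes (it omits the argument and simply cites those two references). Your closing observation is also the right one to make: since the constants $\langle b^i_\ell\rangle_Q$ change with $Q$, one cannot simply feed the once-expanded terms into the $T$-sparse bound; rather one must discard the expansion on each next-generation cube, re-expand the whole commutator there using shift-invariance, and arrange for a single stopping family to serve all $2^L$ indices $\vec\gamma$ at once.

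There is, however, a genuine gap in the stopping-time construction as you have written it. You select the next generation of principal cubes solely by the condition that certain \emph{averages} (of $|f_j|$, or of $|f_i|\prod_{\ell\in B_i}|b^i_\ell - \langle b^i_\ell\rangle_Q|$) exceed a large multiple of the corresponding averages over $Q$, and you then assert that on $Q\setminus\bigcup Q'$ a local weak-type estimate for $T$ yields a pointwise bound by the product of averages. A weak-type estimate only controls the \emph{measure} of the superlevel set of $T(\cdot)$; it does not give a pointwise bound on the complement of cubes chosen only by an averaging condition on the inputs. To get the pointwise control, the exceptional set inside $Q$ must additionally include the superlevel sets of $\bigl|T\bigl(\ldots\,\prod_{\ell\in B_i}(b^i_\ell - \langle b^i_\ell\rangle_Q)f_i 1_{3Q}\,\ldots\bigr)\bigr|$ and of the grand maximal truncated operator $\mathcal M_T$ applied to the same tuple; the threshold is then fixed, via the weak $(1,\ldots,1)$ bounds for $T$ and $\mathcal M_T$, so that this union has measure at most $\tfrac12|Q|$, and the next-generation cubes are obtained by a Calder\'on--Zygmund decomposition of that set. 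This is precisely the extra stopping present in \cite{DHL}, \cite{LORRadv}, \cite{IR}, \cite{KO}. A related point is that you must truncate the inputs at $3Q$ rather than at $Q$: with the cut at $1_Q$ the kernel-decay bound for the off-diagonal contribution $T(\ldots 1_{3Q\setminus 3Q'}\ldots)$ on a child $Q'$ is not available, and the recursion does not close. With these two amendments your sketch becomes a complete proof along the lines the paper references.
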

The proof of Proposition \ref{prop:sparsedom} is similar as \cite{KO}, by employing the idea in \cite{IR}. Thus we omit the details. Before we begin the proof, we also need the following result stated in \cite[Lemma 5.1]{LORRadv}, which is a stronger version than Proposition \ref{prop:sparsecontrol} in the unweighted case.
\begin{lem}\label{lem:uniformsparse}
Let $\mathcal S$ be a $\gamma$-sparse collection of dyadic cubes and $b\in L_{\loc}^1$. Then there exists a $\frac \gamma {2(1+\gamma)}$ -sparse family $\wt {\mathcal S}$ such that $\wt {\mathcal S}\supset \mathcal S$ and for all $Q\in \wt {\mathcal S}$,
\[
|b-b_{Q}|  1_{Q}\lesssim \sum_{P\in \wt{\mathcal S}, P\subset Q}\langle | b-b_P|\rangle_P 1_P,
\]
\end{lem}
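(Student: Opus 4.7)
My plan is to construct $\wt{\mathcal{S}}$ by attaching, to each $Q\in\mathcal{S}$, the sparse family $\mathcal{S}(Q)\subset\mathcal{D}(Q)$ obtained from the iterative Calder\'on--Zygmund stopping procedure used in the proof of Proposition \ref{prop:sparsecontrol}, specialized to $\sigma = 1$. This $\mathcal{S}(Q)$ is $\tfrac{1}{2}$-sparse with $Q$ as its maximal cube. I would then set
\[
\wt{\mathcal{S}} := \bigcup_{Q\in\mathcal{S}}\mathcal{S}(Q),
\]
so that $\wt{\mathcal{S}}\supset\mathcal{S}$ is automatic since $Q$ is the top cube of $\mathcal{S}(Q)$.

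The key observation is that the stopping procedure is genuinely recursive: if $P\in\mathcal{S}(Q)$ arises as a stopping cube at some generation, then $P$ itself is the top cube of the next-generation stopping decomposition, whose descendants remain in $\mathcal{S}(Q)$. Applying the proof of Proposition \ref{prop:sparsecontrol} with $P$ in place of the ambient cube therefore yields
\[
|b - b_P|\,1_P \lesssim \sum_{R\in\mathcal{S}(Q),\; R\subset P}\langle|b-b_R|\rangle_R\,1_R
\]
for \emph{every} $P\in\mathcal{S}(Q)$, not merely for $Q$. Since $\mathcal{S}(Q)\subset\wt{\mathcal{S}}$, the required domination inequality for an arbitrary $P\in\wt{\mathcal{S}}$ follows by selecting any $Q\in\mathcal{S}$ with $P\in\mathcal{S}(Q)$ and invoking the display above.

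The principal technical obstacle, and where the explicit constant $\tfrac{\gamma}{2(1+\gamma)}$ emerges, is verifying the sparseness of $\wt{\mathcal{S}}$. For $P\in\wt{\mathcal{S}}$ let $\pi(P)$ be the minimal cube of $\mathcal{S}$ containing $P$, well-defined by dyadic nesting since $P$ lies in some $\mathcal{S}(Q)$. Let $F_P := P \setminus \bigcup\{R\in\mathcal{S}(\pi(P)): R\subsetneq P\}$ be the canonical $\tfrac{1}{2}$-sparse piece of $P$ inside $\mathcal{S}(\pi(P))$. I would then propose
\[
\wt{E}_P := F_P \setminus \bigcup_{R\in\wt{\mathcal{S}},\; R\subsetneq P} R,
\]
whose pairwise disjointness across $P\in\wt{\mathcal{S}}$ is automatic by the dyadic structure. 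The delicate step is the lower bound $|\wt{E}_P|\geq \tfrac{\gamma}{2(1+\gamma)}|P|$. Using the minimality of $\pi(P)$ one reduces the measure of the subtracted set to a union over cubes coming either from $\mathcal{S}(\pi(P))$ itself (already avoided by $F_P$) or from $\mathcal{S}(Q)$ with some $Q\in\mathcal{S}$, $Q\subsetneq P$; balancing the $\gamma$-sparseness of $\mathcal{S}$ against the $\tfrac{1}{2}$-sparseness within each such $\mathcal{S}(Q)$ in a short packing argument is what should yield the specific constant $\tfrac{\gamma}{2(1+\gamma)}$. I expect this bookkeeping between the two layers of sparseness to absorb the bulk of the effort.
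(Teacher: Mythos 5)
The paper does not prove this lemma; it is quoted verbatim from \cite[Lemma 5.1]{LORRadv}. So I will assess your argument on its own merits, and the verdict is that it has a genuine gap: the sparseness of the naive union $\wt{\mathcal S}=\bigcup_{Q\in\mathcal S}\mathcal S(Q)$ can fail.

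The problem is that you allow each iterated Calder\'on--Zygmund family $\mathcal S(Q)$ to run all the way down inside $Q$, so along an infinite chain in $\mathcal S$ the fill-in cubes from different generations pile up at small scales. Concretely, take $d=1$, $\mathcal S=\{Q_j:=[0,2^{-j})\}_{j\ge 0}$ (this is $\tfrac12$-sparse) and $b(x)=\log(1/x)$. A computation with the stopping rule of Proposition \ref{prop:sparsecontrol} shows that each $\mathcal S(Q_j)$ contains a geometric sub-chain $Q_j\supset Q_{j+m^*}\supset Q_{j+2m^*}\supset\cdots$ for a fixed integer $m^*$ depending only on $b$. For $R=Q_N$, every $j\in\{0,\dots,N\}$ then contributes cubes from $\mathcal S(Q_j)$ inside $R$ of total measure $\gtrsim 2^{-m^*}|R|$, so
\[
\sum_{P\in\wt{\mathcal S},\,P\subset R}|P|\;\gtrsim\;N\,2^{-m^*}|R|\;\xrightarrow[N\to\infty]{}\;\infty\cdot|R|.
\]
Since the Carleson packing condition and sparseness are equivalent, $\wt{\mathcal S}$ fails to be $\eta$-sparse for any $\eta>0$, \emph{regardless} of how the sets $\wt E_P$ are chosen. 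In particular the specific set $\wt E_P=F_P\setminus\bigcup_{R\in\wt{\mathcal S},R\subsetneq P}R$ you propose can be empty: $F_P$ loses measure $\ge\tfrac12|P|$ already, and the union of $\mathcal S(Q)$-cubes with $Q\in\mathcal S$, $Q\subsetneq P$ is not measure-controlled by the $\gamma$-sparseness of $\mathcal S$.

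The recursion and the derivation of the pointwise domination for every $P\in\wt{\mathcal S}$ are fine; the sole missing idea is in the construction of $\wt{\mathcal S}$. In the actual argument of \cite{LORRadv} the Calder\'on--Zygmund decomposition of $|b-b_Q|$ inside $Q\in\mathcal S$ is \emph{truncated at the $\mathcal S$-children of $Q$}, i.e.\ one discards stopping cubes strictly contained in a maximal $\mathcal S$-cube $Q'\subsetneq Q$ and lets $\mathcal S(Q')$ take over below $Q'$. With this cut-off, for a fixed $R$ only the \emph{minimal} $Q\in\mathcal S$ with $R\subsetneq Q$ can deposit fill-in cubes inside $R$ (contributing $\le 2|R|$), while the $Q\in\mathcal S$ with $Q\subseteq R$ contribute $\le 2|Q|$ each, and summing the latter via $\gamma$-sparseness gives $\le\tfrac2\gamma|R|$; in total the Carleson constant is $\tfrac2\gamma+2=\tfrac{2(1+\gamma)}{\gamma}$, which is exactly the claimed sparseness $\tfrac{\gamma}{2(1+\gamma)}$. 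The price is that the truncation introduces boundary terms in the recursion for $|b-b_Q|\,1_Q$, which one must absorb; this is the part of the argument your outline does not address and where the real work of the lemma lies.
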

Now we start the proof of Theorem \ref{thm:main1}. Fix $\vec \gamma$. By Proposition \ref{prop:sparsedom} it suffices to bound
\[
\Big\|\sum_{Q\in \mathcal S} \Big\langle  |f_i|\prod_{\ell\in B_i}  |b^i_{\ell}-\langle b^i_{\ell}\rangle_Q|\Big\rangle_Q \Big(\prod_{j\neq i}\langle |f_j|\rangle_Q \Big) \prod_{\ell\in A_i}  |b^i_{\ell}-\langle b^i_{\ell}\rangle_Q| 1_Q\Big\|_{L^p(\lambda_i^{\frac p{p_i}}\prod_{j\neq i}w_j^{\frac p{p_j}})}.
\]
Let us first consider the case $p\le 1$. By Lemma \ref{lem:uniformsparse}, we have
\[
 |b^i_{\ell}-\langle b^i_{\ell}\rangle_Q| 1_Q\lesssim \sum_{P_{\ell}\in \mathcal S_{\ell}, P_\ell \subset Q}\langle  |b^i_{\ell}-\langle b^i_{\ell}\rangle_{P_{\ell}}|\rangle_{P_{\ell}} 1_{P_{\ell}}\le  \|b_{\ell}^i\|_{\BMO_{\nu_i^{\theta_\ell}}}\sum_{P_{\ell}\in \mathcal S_{\ell}, P_\ell \subset Q} \langle \nu_i^{\theta_{\ell}}\rangle_{P_{\ell}} 1_{P_{\ell}},
\]where we may assume (by splitting $\mathcal S$ if necessary) that $\mathcal S_{\ell}$ is $\rho$-sparse with $\rho<k_i^{-1}$.
Now that $\wt{\mathcal S} = \cup_{1\le \ell\le k_i}\mathcal S_{\ell} $ is still sparse, we have
\begin{align*}
\int_Q  \Big(\prod_{\ell\in A_i} & |b^i_{\ell}-\langle b^i_{\ell}\rangle_Q|^p\Big) \lambda_i^{\frac p{p_i}}\prod_{j\neq i}w_j^{\frac p{p_j}}\\
&\lesssim \prod_{\ell\in A_i} \|b_{\ell}^i\|^p_{\BMO_{\nu_i^{\theta_\ell}}}
 \sum_{\substack{P_{\ell}\in \wt{\mathcal S}, P_\ell \subset Q \\ \ell\in A_i}}\Big( \prod_{\ell\in A_i}\langle \nu_i^{\theta_{\ell}}\rangle_{P_{\ell}}^p\Big)\int_{\cap_{ \ell\in A_i}P_{\ell}} \lambda_i^{\frac p{p_i}}\prod_{j\neq i}w_j^{\frac p{p_j}} .
\end{align*}We may assume $\cap_{\ell\in A_i}P_{\ell}=P_{\ell_0}$. Then by Lemma \ref{lem:RHI} we see that
\[
\langle \nu_{i}^{\theta_{\ell_0}}\rangle_{P_{\ell_0}}^p\int_{P_{\ell_0}}  \lambda_i^{\frac p{p_i}}\prod_{j\neq i}w_j^{\frac p{p_j}}  \lesssim \int_{P_{\ell_0}} \nu_{i}^{p\theta_{\ell_0}} \lambda_i^{\frac p{p_i}}\prod_{j\neq i}w_j^{\frac p{p_j}}.
\]Since $0\le \theta_{\ell_0}\le 1$ and
\[
\nu_i^p  \lambda_i^{\frac p{p_i}}\prod_{j\neq i}w_j^{\frac p{p_j}}=\prod_{j}w_j^{\frac p{p_j}}\in A_\infty, \qquad \lambda_i^{\frac p{p_i}}\prod_{j\neq i}w_j^{\frac p{p_j}}\in A_\infty,
\]we know that
\[
 \nu_{i}^{p\theta_{\ell_0}} \lambda_i^{\frac p{p_i}}\prod_{j\neq i}w_j^{\frac p{p_j}}=\Big(\nu_i^p  \lambda_i^{\frac p{p_i}}\prod_{j\neq i}w_j^{\frac p{p_j}}\Big)^{\theta_{\ell_0}} \Big(  \lambda_i^{\frac p{p_i}}\prod_{j\neq i}w_j^{\frac p{p_j}}\Big)^{1- \theta_{\ell_0}}\in A_\infty.
\]
Therefore, we are able to sum over $P_{\ell_0}\subset\cap_{\ell\in A_i\setminus\{\ell_0\}}P_{\ell} $ and we arrive at
\[
 \sum_{\substack{P_{\ell}\in \wt{ \mathcal S}, P_\ell\subset Q\\ \ell\in A_i\setminus \{\ell_0\}}} \prod_{\ell\in A_i\setminus \{\ell_0\}}\langle \nu_i^{\theta_{\ell}}\rangle_{P_{\ell}}^p\int_{\cap_{ \ell\in A_i\setminus \{\ell_0\}}P_{\ell}}\nu_{i}^{p\theta_{\ell_0}} \lambda_i^{\frac p{p_i}}\prod_{j\neq i}w_j^{\frac p{p_j}}.
\] Iterating this process we get
\begin{align}
\int_Q  \Big(\prod_{\ell\in A_i}  |b^i_{\ell}-\langle b^i_{\ell}\rangle_Q|^p\Big) \lambda_i^{\frac p{p_i}}\prod_{j\neq i}w_j^{\frac p{p_j}}  \lesssim \prod_{\ell\in A_i} \|b_{\ell}^i\|^p_{\BMO_{\nu_i^{\theta_\ell}}}\int_Q \nu_{i}^{p\sum_{\ell \in A_i}\theta_{\ell}}  \lambda_i^{\frac p{p_i}}\prod_{j\neq i}w_j^{\frac p{p_j}}.
\end{align}
The idea is to use Proposition \ref{prop:lms} as a blackbox. It is not difficult to check that
\[
(w_1,\dots, w_{i-1}, \nu_{i}^{p_i\theta_{A_i}}  \lambda_i, w_{i+1},\dots, w_n)\in A_{\vec p},\qquad \theta_{A_i}:=\sum_{\ell \in A_i}\theta_\ell.
\]
With this observation in mind, if $\# B_i=0$, then we are done. So we may assume  $0<\# B_i\le k_i$. Similarly as the above, we write
\[
\int_Q  |f_i|\prod_{\ell\in B_i}  |b^i_{\ell}-\langle b^i_{\ell}\rangle_Q|\lesssim \prod_{\ell\in B_i} \|b_{\ell}^i\|_{\BMO_{\nu_i^{\theta_\ell}}}\sum_{\substack{P_{\ell}\in \wt{\mathcal S}, P_\ell\subset Q \\ \ell\in B_i}} \prod_{\ell\in B_i}\langle \nu_i^{\theta_{\ell}}\rangle_{P_{\ell}}\int_{\cap_{\ell\in B_i}P_{\ell}} |f_i|.
\]We may assume $\cap_{\ell\in B_i}P_{\ell}=P_{\ell_1}$, and $\cap_{\ell\in B_i\setminus \{\ell_1\}}P_{\ell}=P_{\ell_2}$ (if $\# B_i=1$ we take $P_{\ell_2}=Q$). Let $\zeta_1\in A_\infty$ which will be determined later. The point now is we can write
\begin{align*}
\sum_{\substack{P_{\ell_1}\in \wt{\mathcal S}\\ P_{\ell_1}\subset P_{\ell_2}}} \langle \nu_i^{\theta_{\ell_1}}\rangle_{P_{\ell_1}}\int_{P_{\ell_1}}|f_i|&=\sum_{\substack{P_{\ell_1}\in \wt{\mathcal S}\\ P_{\ell_1}\subset P_{\ell_2}}} \langle \nu_i^{\theta_{\ell_1}}\rangle_{P_{\ell_1}} \zeta_1(P_{\ell_1}) \langle |f_i|\zeta_1^{-1}\rangle_{P_{\ell_1}}^{\zeta_1}\\
&\lesssim \int_{P_{\ell_2}} \mathcal A_{\wt{\mathcal S}}^{\zeta_1}(f_i \zeta_1^{-1}) \nu_i^{\theta_{\ell_1}} \zeta_1,
\end{align*}where we have used Lemma \ref{lem:RHI} and
\[
\mathcal A_{\wt{\mathcal S}}^{\sigma}(h):= \sum_{P\in \wt{\mathcal S}} \langle h\rangle_Q^\sigma 1_Q.
\]
Let $h_{\ell_1}=  \mathcal A_{\wt{\mathcal S}}^{\zeta_1}(f_i \zeta_1^{-1}) \nu_i^{\theta_{\ell_1}} \zeta_1$ and $h_{\ell_k}= \mathcal A_{\wt{\mathcal S}}^{\zeta_{k}}(h_{\ell_{k-1}} \zeta_k^{-1}) \nu_i^{\theta_{\ell_k}} \zeta_k$, where again, $\zeta_2,\dots, \zeta_{\# B_i}\in A_\infty$ weights which will be determined later. By iterating this process we are able to write
\begin{align*}
\int_Q  |f_i|\prod_{\ell\in B_i}  |b^i_{\ell}-\langle b^i_{\ell}\rangle_Q|\lesssim \sum_{(\ell_s)}\int_Q  h_{\ell_{\# B_i}},
\end{align*}where the sum is taken over all the permutations of $B_i$. It remains to define $\zeta_k$, $1\le k\le \# B_i$. In fact, to bound $\|h_{\ell_{\# B_i}}\|_{L^{p_i}(\nu_i^{p_i\theta_{A_i}}\lambda_i)}$ correctly we need
\begin{enumerate}
\item $(\nu_i^{\theta_{\ell_{\# B_i}}}\zeta_{\# B_i})^{p_i}\nu_i^{p_i\theta_{A_i}}\lambda_i =\zeta_{\# B_i} $;
\item $(\nu_i^{\theta_{\ell_{k-1}}}\zeta_{k-1})^{p_i}\zeta_k^{1-p_i}=\zeta_{k-1}$;
    \item $\zeta_1^{1-p_i}=w_i$.
\end{enumerate}
These give us that $ \zeta_1=w_i^{1-p_i'}\in A_\infty$ and
\[
 \zeta_k= w_i^{1-p_i'} \nu_i^{p_i'\sum_{s=1}^{k-1}\theta_{\ell_s}}=(\lambda_i^{1-p_i'})^{\sum_{s=1}^{k-1}\theta_{\ell_s}}(w_i^{1-p_i'})^{1-\sum_{s=1}^{k-1}\theta_{\ell_s}}\in A_\infty,\quad 2\le k \le \# B_i.
\]The point of these conditions are that, each time they allow us to use the $L^{p_i}(\zeta_k)$ boundedness of $A_{\wt{\mathcal S}}^{\zeta_k}$, which is guaranteed by $\zeta_k\in A_\infty$. In particular, we have 
\[
  \|h_{\ell_{\# B_i}}\|_{L^{p_i}(\nu_i^{p_i\theta_{A_i}}\lambda_i)}\lesssim \|f_i\|_{L^{p_i}(w_i)}.
\]
Combining the arguments in the above, by Proposition \ref{prop:lms}
we get
\begin{align*}
 & \Big\|\sum_{Q\in \mathcal S} \Big\langle  |f_i|\prod_{\ell\in B_i}  |b^i_{\ell}-\langle b^i_{\ell}\rangle_Q|\Big\rangle_Q \Big(\prod_{j\neq i}\langle |f_j|\rangle_Q \Big) \prod_{\ell\in A_i}  |b^i_{\ell}-\langle b^i_{\ell}\rangle_Q| 1_Q\Big\|_{L^p(\lambda_i^{\frac p{p_i}}\prod_{j\neq i}w_j^{\frac p{p_j}})}\\
 &\lesssim \prod_{\ell=1}^{k_i} \|b_{\ell}^i\|_{\BMO_{\nu_i^{\theta_\ell}}} \prod_{i=1}^{n}\|f_i\|_{L^{p_i}(w_i)}.
\end{align*}
It remains to prove the case of $p>1$. In this case, by duality we reduce the problem to estimate
\[
  \sum_{Q\in \mathcal S} \Big\langle  |f_i|\prod_{\ell\in B_i}  |b^i_{\ell}-\langle b^i_{\ell}\rangle_Q|\Big\rangle_Q \Big(\prod_{j\neq i}\langle |f_j|\rangle_Q \Big) \Big\langle |f_{n+1}|\prod_{\ell\in A_i}  |b^i_{\ell}-\langle b^i_{\ell}\rangle_Q|\Big\rangle_Q |Q|,
\]where $f_{n+1}\in L^{p'}\big((\lambda_i^{\frac{p}{p_i}}\prod_{j\neq i}w_j^{\frac p{p_j}})^{1-p'}\big)$. Then one may deal with
\[
  \Big\langle |f_{n+1}|\prod_{\ell\in A_i}  |b^i_{\ell}-\langle b^i_{\ell}\rangle_Q|\Big\rangle_Q
\]similarly as the above. In fact, if $\# A_i=0$ then we are done. If $\# A_i>0$ then the point now is we can write
\[
  \Big\langle |f_{n+1}|\prod_{\ell\in A_i}  |b^i_{\ell}-\langle b^i_{\ell}\rangle_Q|\Big\rangle_Q \lesssim \sum_{(\ell_t)}\langle g_{\ell_{\# A_i}}\rangle_Q,
\]where the sum is taken over all permutations of $A_i$ and
\[
  g_{\ell_1}=\mathcal A_{\wt{\mathcal S}}^{\eta_1}(f_{n+1}\eta_1^{-1})\nu_i^{\theta_{\ell_1}}\eta_1,\qquad g_{\ell_k}=\mathcal A_{\wt{\mathcal S}}^{\eta_k}(g_{\ell_{k-1}}\eta_k^{-1})\nu_i^{\theta_{\ell_k}}\eta_k
\]with $\eta_1=\lambda_i^{\frac{p}{p_i}}\prod_{j\neq i}w_j^{\frac p{p_j}}\in A_\infty$ and for $2\le k\le  \#A_i$
\[ \eta_k=\nu_i^{p\sum_{t=1}^{k-1}\theta_{\ell_t}}\lambda_i^{\frac{p}{p_i}}\prod_{j\neq i}w_j^{\frac p{p_j}}= \Big(\prod_{i=1}^{n} w_i^{\frac p {p_i}}\Big)^{\sum_{t=1}^{k-1}\theta_{\ell_t}} \Big( \lambda_i^{\frac{p}{p_i}}\prod_{j\neq i}w_j^{\frac p{p_j}}\Big)^{1-\sum_{t=1}^{k-1}\theta_{\ell_t}}\in A_\infty.
\]In particular,
\[
  (\nu_i^{\theta_{\ell_{\# A_i}}} \eta_{\# A_i})^{p'}\Big(\nu_{i}^{p\sum_{\ell \in A_i}\theta_{\ell}}  \lambda_i^{\frac p{p_i}}\prod_{j\neq i}w_j^{\frac p{p_j}}\Big)^{1-p'}= \eta_{\# A_i}
\]
and \[
  \|g_{\ell_{\# A_i}}\|_{L^{p'}\Big(\big(\nu_{i}^{p\sum_{\ell \in A_i}\theta_{\ell}}  \lambda_i^{\frac p{p_i}}\prod_{j\neq i}w_j^{\frac p{p_j}}\big)^{1-p'}\Big)}\lesssim \|f_{n+1}\|_{L^{p'}\big((\lambda_i^{\frac{p}{p_i}}\prod_{j\neq i}w_j^{\frac p{p_j}})^{1-p'}\big)}.
\]Hence by applying H\"older's inequality and Proposition \ref{prop:lms} with the multiple weight
\[
  (w_1,\dots, w_{i-1}, \nu_{i}^{p_i\theta_{A_i}}  \lambda_i, w_{i+1},\dots, w_n)\in A_{\vec p}
\]concludes the proof.

\begin{rem}
  The reason why we do not track the constant above is that, we use Lemma \ref{lem:RHI} frequently which makes the presentation of the constant already very complicated, taking into account that there is even permutation involved. However, we would like to comment that the method used in the above improves the known bound obtained through the Cauchy integral trick if $\nu_i=1$. In fact, if $\nu_i=1$, for simplicity denote
  \[
    w=\prod_{i=1}^{n}w_i^{\frac p{p_i}}=\lambda_i^{\frac p{p_i}}\prod_{j\neq i} w_j^{\frac p{p_j}}.
  \]Then apart from the constant from Proposition \ref{prop:lms}, the constant produces in the case $p\le 1$ is
  \begin{align*}
    [w]_{A_\infty}^{\# A_i} \| A_{\wt{\mathcal S}}^{w_i^{1-p_i'}}\|_{L^{p_i}(w_i^{1-p_i'})}^{\# B_i}\le C [w]_{A_\infty}^{\# A_i} [w_i^{1-p_i'}]_{A_\infty}^{\# B_i}\le C ([w]_{A_\infty}+[w_i^{1-p_i'}]_{A_\infty})^{k_i}.
  \end{align*}
  And for the case $p>1$, we can track that the related constant is
  \[
    \| A_{\wt{\mathcal S}}^{w_i^{1-p_i'}}\|_{L^{p_i}(w_i^{1-p_i'})}^{\# B_i}\| A_{\wt{\mathcal S}}^{w}\|_{L^{p'}(w)}^{\# A_i}\le  C ([w]_{A_\infty}+[w_i^{1-p_i'}]_{A_\infty})^{k_i}.
  \]
  In both cases we have used the fact that
  \[
    \| A_{\mathcal S}^\sigma \|_{L^p(\sigma)}\le C [\sigma]_{A_\infty}, \qquad 1<p<\infty.
  \]Indeed, for $f, g\ge 0$
  \begin{align*}
  \langle A_{\mathcal S}^\sigma f, g\sigma \rangle =\sum_{Q\in \mathcal S}\langle f\rangle_Q^\sigma \langle g \rangle_Q^\sigma \sigma(Q)&\le \sum_{Q\in \mathcal S}\Big(\bla (M_d^\sigma f M_d^\sigma g )^{\frac 12}\bra_Q^\sigma\Big)^2 \sigma(Q)\\&\le c[\sigma]_{A_\infty}\| M_d^\sigma f M_d^\sigma g\|_{L^1(\sigma)}\le c[\sigma]_{A_\infty}\|f\|_{L^p(\sigma)}\|g\|_{L^{p'}(\sigma)} ,
  \end{align*}where we have used Carleson's embedding theorem in the second inequality due to the fact that
  \[
    \sum_{Q\in \mathcal S, Q\subset R}\sigma(Q)\le c [\sigma]_{A_\infty}\sigma(R).
  \]
  Now we see that both cases improve the bounds obtained through the Cauchy integral trick (see \cite[Theorem 5.1]{DHL} and \cite[Theorem 5.6]{BMMST}) since
  \begin{align*}
   [w]_{A_\infty}+[w_i^{1-p_i'}]_{A_\infty}\le \begin{cases}
   [w]_{A_\infty}+\sum_{i=1}^n [w_i^{1-p_i'}]_{A_\infty}, & \\
   c[\vec w]_{A_{\vec p}}^{\max \{1, \frac{p_1'}{p},\cdots, \frac{p_n'}{p}\}}. &
                                               \end{cases}
  \end{align*}
  The first inequality is trivial, for the second, it is recorded in \cite[Lemma 3.1]{DLP} that
  \[
   [w_i^{1-p_i'}]_{A_\infty} \le [\vec w]_{A_{\vec p}}^{\frac {p_i'}p},
  \]together with \cite[Lemma 3.1]{LMS} we have $[w]_{A_\infty}\le [\vec w]_{A_{\vec p}}$ (actually this can be checked directly). The general logic is, our method only involves two $A_\infty$ constants which connect to the definition of the commutator, while the Cauchy integral trick needs to involve all $A_\infty$ constants.
\end{rem}

We complete this section by the following general version of Theorem \ref{thm:main1}, whose proof is similar as the above and hence we omit the details.
\begin{thm}
Let  $T$ be an $n$-linear Calder\'on-Zygmund operator and $\mathcal I\subset \{1,\dots, n\}$. Let $C_{b_{\mathcal I}}^{k_{\mathcal I}}(T)$ be defined as in \eqref{eq:itercommu}. Assume that for any $i\in \mathcal I$, $\theta_1^i,\dots, \theta_{k_i}^i\in [0,1]$ such that $\sum_{\ell=1}^{k_i}\theta_{\ell}^i=1$, and let $\theta^i=\max\{\theta^i_\ell\}_{1\le \ell\le k_i}$. Let $1<p_1,\dots, p_n<\infty$ and $\frac 1p=\sum_{i=1}^n \frac1{p_i}$. Assume that $(u_1,\dots, u_n)\in A_{\vec p}$, where
\begin{align*}
u_i=\begin{cases}
w_i, & \textup{if} \,\, i\notin \mathcal I\\
w_i \,\,\textup{or}\,\, \lambda_i& \textup{if} \, i\in \mathcal I.
\end{cases}
\end{align*}
If  $\nu_i^{\theta^i} :=w_i^{\frac {\theta^i}{p_i}}\lambda_i^{-\frac {\theta^i}{p_i}}\in A_\infty$ and $b^i_{\ell}\in \BMO_{\nu_i^{\theta^i_{\ell}}}$ for every $1\le \ell\le k_i$ and every $i\in \mathcal I$, then
  \begin{align*}
  \Big\| C_{b_{\mathcal I}}^{k_{\mathcal I}}(T): L^{p_1}(w_1)\times \cdots\times L^{p_n}(w_n)\to L^p\big( (\prod_{i\in \mathcal I}\lambda_i^{\frac p{p_i}})\prod_{j\notin \mathcal I  }w_j^{\frac p{p_j}} \big)\Big\|\lesssim \prod_{i\in \mathcal I} \prod_{\ell=1}^{k_i} \|b_{\ell}^i\|_{\BMO_{\nu_i^{\theta^i_\ell}}}.
  \end{align*}
\end{thm}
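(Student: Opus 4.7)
The plan is to mimic the proof of Theorem~\ref{thm:main1}, iterating the argument simultaneously across all indices $i \in \mathcal I$. By Proposition~\ref{prop:sparsedom} it suffices, for each fixed $\vec\gamma \in \{1,2\}^L$ with $L = \sum_{i \in \mathcal I} k_i$, to control the sparse form
\[
\sum_{Q\in \mathcal S} \prod_{i\in \mathcal I} \Big\langle |f_i| \prod_{\ell \in B_i} |b^i_\ell - \langle b^i_\ell\rangle_Q| \Big\rangle_Q \Big( \prod_{j\notin \mathcal I} \langle |f_j|\rangle_Q \Big) \prod_{i\in \mathcal I} \prod_{\ell \in A_i} |b^i_\ell - \langle b^i_\ell\rangle_Q|\, 1_Q
\]
in the target norm, where $A_i,B_i$ are as in Proposition~\ref{prop:sparsedom} and $\theta_{A_i}^i := \sum_{\ell \in A_i} \theta_\ell^i$.

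For the ``outside'' factors I would, at each $i \in \mathcal I$ in turn, apply Lemma~\ref{lem:uniformsparse} to every $|b^i_\ell - \langle b^i_\ell\rangle_Q|\,1_Q$ with $\ell \in A_i$ and iterate exactly as in the proof of Theorem~\ref{thm:main1}. The key $A_\infty$ input now reads: for every $\theta \in [0,1]$ and every $i \in \mathcal I$,
\[
\nu_i^{p\theta}\prod_{k\in \mathcal I}\lambda_k^{p/p_k}\prod_{j\notin \mathcal I}w_j^{p/p_j} \in A_\infty,
\]
which follows by writing this weight as a convex combination $[\,\cdot\,]^{\theta}[\,\cdot\,]^{1-\theta}$ of two of the hypothesized product weights, namely those attached to the $A_{\vec p}$-tuples with $i$-th coordinate equal to $w_i$ and $\lambda_i$ respectively (keeping $u_k = \lambda_k$ for $k \in \mathcal I\setminus\{i\}$ and $u_j = w_j$ for $j \notin \mathcal I$). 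Consolidating all the outside factors this way, I arrive at the multiple weight tuple $(u_1,\dots,u_n)$ defined by $u_i = w_i^{\theta_{A_i}^i}\lambda_i^{1-\theta_{A_i}^i}$ for $i \in \mathcal I$ and $u_i = w_i$ otherwise; this tuple lies in $A_{\vec p}$ by the same convex-combination trick applied independently in each $i \in \mathcal I$, drawing on the $2^{|\mathcal I|}$ hypothesized tuples.

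For the ``inside'' factors, at each $i \in \mathcal I$ with $B_i \neq \varnothing$ I would peel off the $\#B_i$ BMO terms sequentially via weighted sparse operators $\mathcal A_{\wt{\mathcal S}}^{\zeta_k^{(i)}}$, with $\zeta_1^{(i)} = w_i^{1-p_i'} \in A_\infty$ (by \eqref{eq:multipleweights}) and
\[
\zeta_k^{(i)} = (\lambda_i^{1-p_i'})^{\sum_{s<k}\theta_{\ell_s}^i}(w_i^{1-p_i'})^{1-\sum_{s<k}\theta_{\ell_s}^i} \in A_\infty, \qquad 2\le k \le \#B_i,
\]
the $A_\infty$ membership again coming from \eqref{eq:multipleweights} applied to the two relevant tuples. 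Since different $i \in \mathcal I$ act on disjoint input slots, these peelings commute and can be run in parallel, and the chain is tuned so that the final $L^{p_i}$ norm produces $w_i$ rather than $u_i$. After all peelings are done, an ordinary multilinear sparse operator at the tuple $(u_1,\dots,u_n)$ remains, which Proposition~\ref{prop:lms} controls, finishing the case $p \le 1$. For $p > 1$ I would dualize against $f_{n+1}$ in $L^{p'}$ of the output weight raised to the $(1-p')$ power and run a parallel peeling on the $A_i$ factors through dual $A_\infty$ weights $\eta_k^{(i)}$ built from the output weight, exactly mirroring the end of the proof of Theorem~\ref{thm:main1}.

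The main obstacle I foresee is pure bookkeeping: verifying $A_\infty$ membership of every intermediate weight $\zeta_k^{(i)},\eta_k^{(i)}$ from the hypothesized $2^{|\mathcal I|}$-family of $A_{\vec p}$-conditions, controlling simultaneously the sum over permutations of each $B_i$ (and each $A_i$ on the dual side), and carrying all this notation through uniformly in $\vec\gamma$. The combinatorics grow rapidly with $|\mathcal I|$, which is why the author describes the general case as ``more technical'' and suppresses the written details.
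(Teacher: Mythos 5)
Your proposal is correct and follows precisely the strategy the paper sketches: the paper states this theorem with the remark that "the proof is similar as the above and hence we omit the details," and your outline supplies exactly those details — applying Proposition~\ref{prop:sparsedom} for general $\mathcal I$, absorbing the outside $A_i$-factors via Lemma~\ref{lem:uniformsparse} together with the geometric convex-combination argument over the $2^{|\mathcal I|}$ hypothesized $A_{\vec p}$-tuples (which is exactly how one checks $(u_1,\dots,u_n)\in A_{\vec p}$ with $u_i=\nu_i^{p_i\theta^i_{A_i}}\lambda_i$), and peeling the inside $B_i$-factors with slot-wise $\zeta_k^{(i)}\in A_\infty$ weights, dualizing in the case $p>1$. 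The only slightly garbled spot is the phrase "the final $L^{p_i}$ norm produces $w_i$ rather than $u_i$": what you mean (and what the chain conditions $\zeta_1^{(i)\,1-p_i}=w_i$ enforce) is that the peeling ends with the bound $\|h^{(i)}_{\#B_i}\|_{L^{p_i}(u_i)}\lesssim\|f_i\|_{L^{p_i}(w_i)}$, so the input space is the original $L^{p_i}(w_i)$ even though the multilinear sparse bound is then applied at the tuple $\vec u$.
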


\section{Proof of Theorem \ref{thm:main2}}\label{sec:proofmain2}
This section is devoted to proving Theorem \ref{thm:main2}. The idea is to follow the median method in \cite{Hyt18} and developed in \cite{LMV}.
Notice that the non-degeneracy of the kernel \eqref{eq:non-degen} implies that,
for any cube $Q$, we may find $\wt Q$ such that $\ell (Q)=\ell(\wt Q)$ and $\dist (Q, \wt Q)\ge C_0 \ell(Q)$ and there exists some $\sigma \in \C$ with $|\sigma|=1$ such that for all $x\in \wt Q$ and $y_1, \dots, y_n\in Q$ there holds
\[
\Re \sigma K(x, y_1,\dots, y_n)\gtrsim |Q|^{-n}.
\]
As we have mentioned in Remark \ref{rem:weakerassumption}, we will prove the same result under a weaker boundedness assumption. That is, instead of assuming
\[
  \Big\| C_{b}^{k_i}(T): L^{p_1}(w_1)\times \cdots\times L^{p_n}(w_n)\to L^p\big( \lambda_i^{\frac p{p_i}}\prod_{j\neq i  }w_j^{\frac p{p_j}} \big)\Big\|<\infty,
\]we assume 
\begin{equation}\label{eq:weakerbound}
 \sup_{A\subset Q} \frac 1{ \prod_{i=1}^n \sigma_i(Q)^{\frac 1{p_i}}} \Big\| 1_{\wt Q}  C_{b}^{k_i}(T)(1_{Q} \sigma_1, \dots, 1_A \sigma_i,\dots, 1_Q \sigma_n) \Big\|_{L^{p,\infty}( \lambda_i^{\frac p{p_i}}\prod_{j\neq i  }w_j^{\frac p{p_j}} )},
\end{equation}where $\sigma_i=w_i^{1-p_i'}$, $1\le i\le n$.
For arbitrary $\alpha \in \R$ and $x\in \wt Q \cap \{b\ge \alpha \}$, we have
\begin{align*}
\frac{1}{|Q|} \int_Q& (\alpha-b)_+^{k_i} \sigma_i \prod_{j\neq i}\frac{\sigma_j(Q)}{|Q|} \\&\le \Re \sigma \int_{Q\cap \{b\le \alpha \}}\int_{Q^{n-1}} (b(x)-b(y_i))^{k_i} K(x,y_1,\dots, y_n) \prod_{j=1}^{n}\sigma_j (y_j) dy_j.
\end{align*}
Then let $\alpha $ be a median of $b$ on $\wt Q$, since $w\in A_\infty$ we have
\[
\big| \wt Q \cap \{b\ge \alpha \}\big|\sim |\wt Q|\Rightarrow v(\wt Q \cap \{b\ge \alpha \})\sim v(\wt Q)\sim v(Q),
\]
where the last $\sim$ holds since $v=\lambda_i^{\frac p{p_i}}\prod_{j\neq i  }w_j^{\frac p{p_j}}\in A_\infty$ is doubling. Hence we have
\begin{align*}
 &v(Q)^{\frac 1p}\frac{1}{|Q|} \int_Q (\alpha-b)_+^{k_i} \sigma_i \prod_{j\neq i}\frac{\sigma_j(Q)}{|Q|}\\
 &\lesssim  \left\|  \Re \sigma \int_{Q\cap \{b\le \alpha \}}\int_{Q^{n-1}} (b(x)-b(y_i))^{k_i} K(x,y_1,\dots, y_n) \prod_{j=1}^n\sigma_j (y_j) dy_j \right\|_{L^{p,\infty}( \wt Q \cap \{b\ge \alpha\}; v)}\\
 &\lesssim \prod_{i=1}^n \sigma_i (Q)^{\frac 1{p_i}}.
\end{align*}
Plugging the fact that
\[
\Big(\frac{v(Q)}{|Q|}\Big)^{\frac 1p} \Big(\frac{\eta_i(Q)}{|Q|}\Big)^{\frac 1{p_i'}}\prod_{j\neq i} \Big( \frac{\sigma_j(Q)}{|Q|}\Big)^{\frac 1{p_j'}}\ge 1,
\]where $\eta_i=\lambda_i^{1-p_i'}$, 
we arrive at
\[
  \frac{1}{\eta_i(Q)^{\frac 1{p_i'}}\sigma_i(Q)^{\frac 1{p_i}}} \int_Q (\alpha-b)_+^{k_i} \sigma_i\lesssim 1.
\]By H\"older's inequality we then get 
\[
  \fint_Q  (\alpha-b)_+ \sigma_i^{\frac 1{k_i}}\lesssim \langle \eta_i\rangle_Q^{\frac 1{k_ip_i'}} \langle \sigma_i\rangle_Q^{\frac 1{k_ip_i}}\lesssim \langle \nu_i^{\frac 1{k_i}} \sigma_i^{\frac 1{k_i}} \rangle_Q,
\]where in the last inequality we have used Lemma \ref{lem:RHI}. 
Likewise, we also have the symmetrical estimate
\[
\fint_Q  (b-\alpha)_+ \sigma_i^{\frac 1{k_i}}\lesssim \langle \nu_i^{\frac 1{k_i}} \sigma_i^{\frac 1{k_i}} \rangle_Q.
\]
Hence \[
 \frac 1{\nu_i^{\frac 1{k_i}} \sigma_i^{\frac 1{k_i}}(Q)} \int_Q  |b-\alpha| \sigma_i^{\frac 1{k_i}}\lesssim 1.
\]
This is almost done, however, we need to take some care here. We  first notice that the above estimate gives us that $b\in \wt{ \BMO}_{\nu_i^{1/{k_i}} }(\sigma_i^{1/{k_i}})$. This step is simply seen by using triangle inequality. Then we conclude that $b\in \BMO_{\nu_i^{1/{k_i}}}$ by Lemma \ref{lem:equivalence} and Remark \ref{rem:altdef}.

\section{Further discussions}\label{sec:rough}
In the last section we provide a useful idea to deal with the sparse form related with commutators. Our start point is, given $\mu,\lambda\in A_p$ and $\nu=\mu^{\frac 1p}\lambda^{-\frac 1p}$,  the upper bound of
\[
  \sum_{Q\in \mathcal S} \bla  |b-b_Q|^s |f|^s\bra_Q^{\frac 1s} \langle |g|\rangle_Q |Q|\le C \|b\|_{\BMO_\nu} \|f\|_{L^p(\mu)}\|g\|_{L^{p'}(\lambda^{1-p'})},\qquad s>1,
\] which is left open in \cite{LORR}. The main point is, the average of 
$
  |b-b_Q| f
$ is not $L^1$ but rather some $L^s$ with $s>1$. In the first sight this may produce some trouble, as if we use 
\[
  |b-b_Q|1_Q \lesssim \sum_{P\in \mathcal S, P\subset Q}\langle |b-b_P|\rangle_P 1_P\le \|b\|_{\BMO_\nu} \sum_{P\in \mathcal S, P\subset Q}\langle \nu\rangle_P 1_P,
\]one can not handle it as before due to the power $s$. However, the new idea here is to use the Fefferman-Stein type inequality. In the classical case, the optimal result is due to P\'erez \cite{P94}.  Here we use the corresponding result for sparse operators, see \cite{B14,LPRRR}. For our purpose we record it in the following way:
\begin{equation}\label{eq:fs}
  \| A_{\mathcal S} f\|_{L^p(w)}\le C p' (r')^{\frac 1{p'}} \|f\|_{L^p(M_r w)}.
\end{equation}
Applying  \eqref{eq:fs} we have 
\begin{align*}
   \bla  |b-b_Q|^s |f|^s\bra_Q^{\frac 1s}\lesssim s' (r')^{\frac 1{s'}} \|b\|_{\BMO_\nu}\langle \nu^{s} M_r(|f|^s)\rangle_Q^{\frac 1s}=s' (r')^{\frac 1{s'}} \|b\|_{\BMO_\nu}\langle \nu^{s} M_{rs}(f)^s\rangle_Q^{\frac 1s} .
\end{align*} Then the proof is done by using the well-known bound  of $(s,1)$-sparse form (see \cite{LPRRR}) and the boundedness of $M_{rs}$ on $L^p(\mu)$ provided that $rs$ is close to $1$ enough so that $\mu\in A_{p/{sr}}$. Without providing more details we record the following result. 
\begin{thm}
Let $\Omega \in L^\infty(\mathbb S^{n-1})$ satisfy $\int_{\mathbb S^{n-1}}\Omega=0$. Suppose that $\mu, \lambda\in A_p$ and let $\nu=\mu^{\frac 1p}\lambda^{-\frac 1p}$. If $b\in \BMO_{\nu^{1/k}}$ for some $k\in \Z_+$, then 
\[
\|C_b^k(T_\Omega)\|_{L^p(\mu)\to L^p(\lambda)}\lesssim \|b\|_{\BMO_{\nu^{1/k}}}^k.
\]
\end{thm}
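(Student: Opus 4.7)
The plan is to iterate the $k=1$ scheme sketched above for each term in a bilinear sparse form for $C_b^k(T_\Omega)$.

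First, I would invoke the (by now standard) bilinear sparse form for the $k$-th commutator of a rough homogeneous singular integral: for any $s>1$ close enough to $1$, compactly supported $f,g$, and sparse families $\calS_m$,
\[
|\langle C_b^k(T_\Omega)f,g\rangle|\lesssim \|\Omega\|_\infty\sum_{m=1}^{3^d}\sum_{k_1+k_2=k}\sum_{Q\in \calS_m}\langle |b-b_Q|^{k_1}|f|\rangle_Q\,\langle |b-b_Q|^{k_2 s}|g|^s\rangle_Q^{1/s}|Q|.
\]
It then suffices to bound each such form by $\|b\|^k_{\BMO_{\nu^{1/k}}}\|f\|_{L^p(\mu)}\|g\|_{L^{p'}(\lambda^{1-p'})}$.

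Second, on the $L^1$ side I would apply Lemma \ref{lem:uniformsparse} with the $A_\infty$ weight $\nu^{1/k}$ to enlarge each $\calS_m$ to a sparse family $\wt{\calS}$ with the property that for every $Q\in \wt{\calS}$,
\[
|b-b_Q|1_Q \lesssim \|b\|_{\BMO_{\nu^{1/k}}}\sum_{P\in \wt{\calS},\,P\subset Q}\langle \nu^{1/k}\rangle_P 1_P.
\]
Iterating this pointwise bound $k_1$ times, exactly along the lines of the proof of Theorem \ref{thm:main1} (using Lemma \ref{lem:RHI} and the $A_\infty$ membership of the powers of $\nu^{1/k}$, automatic here since $\nu\in A_2$ in the linear setting), rewrites $\langle |b-b_Q|^{k_1}|f|\rangle_Q$ in terms of nested weighted sparse operators applied to $f$, with the accumulated weight power $\nu^{k_1/k}$.

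Third, on the $L^s$ side I would iterate the Fefferman--Stein bound \eqref{eq:fs}. The case $k_2=1$ is precisely the model computation spelled out above. For $k_2\ge 2$, after pulling out $\|b\|^{k_2}_{\BMO_{\nu^{1/k}}}$ via the same pointwise control, the remaining quantity $\langle (A_{\wt{\calS}}\nu^{1/k})^{k_2 s}|g|^s\rangle_Q^{1/s}$ is estimated by applying \eqref{eq:fs} $k_2$ times in succession: each application converts one factor $A_{\wt{\calS}}\nu^{1/k}$ into $\nu^{1/k}$ at the cost of a maximal operator $M_r$ on the current weight, and the spurious $M_r$'s are absorbed using the $A_\infty$ property of $\nu^{j/k}$. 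The outcome is
\[
\langle |b-b_Q|^{k_2 s}|g|^s\rangle_Q^{1/s}\lesssim \|b\|^{k_2}_{\BMO_{\nu^{1/k}}}\langle \nu^{k_2 s/k}M_{rs}(g)^s\rangle_Q^{1/s}
\]
for any $r>1$ sufficiently close to $1$.

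Combining the two sides pairs the accumulated factor $\nu^{k_1/k}$ with $\nu^{k_2/k}$, whose product is the full Bloom weight $\nu$. The resulting weighted $(1,s)$-sparse form then falls into the framework of \cite{LPRRR} and is bounded by $\|f\|_{L^p(\mu)}\|M_{rs}(g)\|_{L^{p'}(\lambda^{1-p'})}\lesssim \|f\|_{L^p(\mu)}\|g\|_{L^{p'}(\lambda^{1-p'})}$, where the last inequality uses the $L^{p'}(\lambda^{1-p'})$-boundedness of $M_{rs}$ for $rs$ close to $1$ (by openness of $A_{p'}$, since $\lambda^{1-p'}\in A_{p'}$). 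The main obstacle will be the iterated Fefferman--Stein step for $k_2\ge 2$: the recursion must be organized so that exactly one unit of $\nu^{1/k}$ is peeled off at each stage, the spurious $M_r$ factors are absorbed into $A_\infty$ weights without eroding the accumulated weight, and the final constants remain polynomial in $k$.
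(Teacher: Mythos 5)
Your overall strategy is the one the paper sketches: reduce to a $(1,s)$-bilinear sparse form for $C_b^k(T_\Omega)$, use Lemma~\ref{lem:uniformsparse} to replace $|b-b_Q|$ pointwise by a sparse sum of $\langle\nu^{1/k}\rangle_P$, and then attack the $L^s$-side averages via a Fefferman--Stein inequality. For $k_2=1$ your computation is exactly the paper's model estimate
\[
\bla |b-b_Q|^s|f|^s\bra_Q^{1/s}\lesssim \|b\|_{\BMO_\nu}\langle \nu^s M_{rs}(f)^s\rangle_Q^{1/s},
\]
so up to that point the two arguments coincide.

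The gap, which you yourself flag, is in the iterated Fefferman--Stein step for $k_2\ge 2$, and it is a real one. After one application of \eqref{eq:fs} you are left with $M_r$ acting on a function that still contains $(A_{\wt{\calS}}\nu^{1/k})^{(k_2-1)s}$, and there is no way to peel off the next sparse factor without first commuting $M_r$ past it. Your proposed fix, "absorbing the spurious $M_r$'s using the $A_\infty$ property of $\nu^{j/k}$," does not work: the pointwise bound $M_r w\lesssim w$ requires $w\in A_1$, not $A_\infty$, and $\nu\in A_2$ gives no $A_1$ control on $\nu^{j/k}$. So the recursion as stated does not close. This is precisely where the paper deviates: instead of the unweighted \eqref{eq:fs}, it passes (via Lemma~\ref{lem:RHI}) to the \emph{weighted} sparse operator $A_{\wt{\calS}}^\sigma\nu^{1/k}=\sum_P\langle\nu^{1/k}\rangle_P^\sigma 1_P$, and then uses the weighted Fefferman--Stein inequality \eqref{eq:fsw} with a helper weight $\sigma$ chosen so that the identities $\nu^{p}\sigma^{p/s}\lambda=\sigma$ and $\sigma^{1-p/s}=\mu$ hold. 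Those algebraic relations are what make each layer of the recursion consistent — the same mechanism that drives the proof of Theorem~\ref{thm:main1}, where each $\zeta_k$ is picked to satisfy a companion cancellation. Your write-up never introduces the weighted sparse operator $A^\sigma$ or the auxiliary $\sigma$; without that you are missing the key ingredient that lets the $k_2\ge 2$ case close. (To be fair, the paper itself presents this part only in outline; but the weighted $A^\sigma$ together with \eqref{eq:fsw} is the intended route, and you should reformulate your iteration in those terms.)
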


In the multilinear case, the idea is essentially by combining the idea in Section \ref{sec:proofmain1} and the above. However, we are not able to prove the case when different BMO functions are paired with the same function yet. Nevertheless, we show that
our method works for, e.g. 
\begin{align*}
  \sum_{Q\in \mathcal S} &\bla  |b-b_Q|^{ks} |f|^s\bra_Q^{\frac 1s} \langle |g|^s\rangle_Q^{\frac 1s} \langle |h|^s\rangle_Q^{\frac 1s}|Q|\\&\lesssim \|b\|_{\BMO_{\nu^{1/k}}}^k \|f\|_{L^{p_1}(w_1)}\|g\|_{L^{p_2}(w_2)}\|h\|_{L^{p'}((\lambda_1^{\frac p{p_1}} w_2^{\frac p{p_2}})^{1-p'})},
\end{align*}
where $(w_1, w_2)\in A_{\vec p}$, $\nu^{\frac 1k}=w_1^{\frac 1{kp_1}}\lambda^{-\frac 1{kp_1}}\in A_\infty$ and again, $s>1$ can be taken to be very close to $1$.
Now the idea is, we write 
\begin{align*}
 |b-b_Q|^k1_Q& \lesssim  \|b\|_{\BMO_{\nu^{1/k}}}^k\Big( \sum_{P\in \mathcal S, P\subset Q}\langle \nu^{\frac 1k}\rangle_P 1_P\Big)^k\\& \lesssim
 \|b\|_{\BMO_{\nu^{1/k}}}^k\Big( \sum_{P\in \mathcal S, P\subset Q}\langle \nu^{\frac 1k}\rangle_P^{\sigma} 1_P\Big)^k,
\end{align*}where $\sigma \in A_\infty$ will be determined later and we have used Lemma \ref{lem:RHI} in the second inequality. Then it is a matter to formulate a weighted version of \eqref{eq:fs}, say
\begin{equation}\label{eq:fsw}
  \| A_{\mathcal S}^\sigma  f\|_{L^p(w\sigma )}\le C  \|f\|_{L^p(\sigma M_{r,\sigma}^d w)}.
\end{equation}
The proof of \eqref{eq:fsw} is quite similar as the proof of \eqref{eq:fs} stated in \cite{LPRRR} and hence we omit the details. Now applying  \eqref{eq:fsw} with $\sigma=w_1^{1-\big(\frac {p_1}s\big)'}$, which is in $A_\infty$ with suitable $s$, we have
\begin{align*}
\bla  |b-b_Q|^{ks} |f|^s\bra_Q^{\frac 1s}&\lesssim \|b\|_{\BMO_{\nu^{1/k}}}^k \big\langle \nu^s M_{r,\sigma}^d(|f|^s \sigma^{-1}) \sigma  \big\rangle_Q^{\frac 1s}\\
&=\|b\|_{\BMO_{\nu^{1/k}}}^k \big\langle \nu^s M_{rs,\sigma}^d(|f| \sigma^{-\frac 1s})^s \sigma  \big\rangle_Q^{\frac 1s}.
\end{align*}
Then it is a matter of using the weighted estimates for $(s,s,s)$ sparse form (this is easily seen by the open property of $A_{\vec p}$) and the boundedness of $ M_{rs,\sigma}^d$ on $L^{p_1}(\sigma)$, together with the following two facts:
\[
  \nu^{p_1} \sigma^{\frac {p_1}s} \lambda_1= \sigma,\qquad \mbox{and}\qquad \sigma^{1-\frac{p_1}{s}}=w_1.
\]In the above we do not track the dependence on the weight constants, and we encourage the interested readers to do so.

 \end{document}